\documentclass[11pt]{amsart}
\usepackage{amsmath,amsthm,amsfonts,amssymb,latexsym}
\usepackage[all,2cell,dvips]{xy}

 \usepackage{amsaddr}

\newtheorem{theorem}{Theorem}[section]
\newtheorem{lemma}[theorem]{Lemma}
\newtheorem{proposition}[theorem]{Proposition}
\newtheorem{corollary}[theorem]{Corollary}

\theoremstyle{definition}
\newtheorem{definition}[theorem]{Definition}

\theoremstyle{remark}

\newtheorem{fact}{Fact}




\newcommand{\Q}{\mathbb{Q}}


\newcommand{\Mr}{\operatorname{MR}}

\newcommand{\tp}{\operatorname{tp}}

\newcommand{\trdeg}{\operatorname{trdeg}}

\newcommand{\rk}{\operatorname{rk}}

\newcommand{\scl}{\operatorname{scl}}
\newcommand{\sdim}{\operatorname{sdim}}




\newcommand{\bk}{\mathbf{k}}

\newcommand{\Cal}{\mathcal}

\def \<{\langle}
\def \>{\rangle}

\def \((  {(\!(}
\def \)) {)\!)}

\numberwithin{equation}{section}


\allowdisplaybreaks[2]

\begin{document}

\bibliographystyle{plain}
\title{Topological study of pairs of algebraically closed fields}
 
\author{Ayhan G\"{u}nayd\i n}
\address{Bo\u{g}azi\c{c}i \"{U}niversitesi, Istanbul, Turkey}
\email{ayhan.gunaydin@boun.edu.tr}

\thanks{This work was partially supported by T\"{U}B\.{I}TAK Career Grant 113F119. }

\date{\today}

\maketitle
\begin{abstract}
We construct a topology on a given algebraically closed field with a distinguished subfield which is also algebraically closed. This topology is finer than Zariski topology and it captures the sets definable in the pair of algebraically closed fields as above; in the sense that definable sets are exactly the constructible sets in this topology.
\end{abstract}

\section{Introduction}
The study of pairs of algebraically closed fields goes back to Keisler; he proves in \cite{Keisler} that the theory of proper pairs of algebraically closed fields of given characteristic is complete. One can extract a quantifier elimination result from his proof which we restate in Section \ref{definable_sets_section}.

\medskip\noindent
We study pairs of algebraically closed fields in  the language $\{+,\cdot,0,1,U\}$, where $U$ is a unary predicate, which is interpreted as the smaller field and we write them as $(\Omega,\bk)$.

\medskip\noindent
It is worth mentioning that recently, Delon proved a quantifier elimination in a richer language; see \cite{Delon}.

\medskip\noindent
In this paper, we suggest a topological study of these pairs in characteristic zero. Each such pair, $(\Omega,\bk)$ has an elementary extension $(\Omega^*,\bk^*)$ such that $\Omega^*$ has a derivation on it, with which it becomes a differentially closed field and $\bk^*$ is the constant field for this derivation.  For our purposes, we may work in $(\Omega^*,\bk^*)$, which we do and still call it $(\Omega,\bk)$. For each $n>0$, we have the Kolchin topology on $\Omega^n$. However, there are Kolchin closed sets that are not definable in the pair. Therefore, we have to choose some of them to construe a useful topology. We could just declare that the closed sets are the Kolchin closed sets that are definable in the pair, but this would not be very useful in understanding the definable sets.  We introduce a topology in Section \ref{topology_sec} in detail. Here we just say that it is the coarsest topology in which the set of $n$-tuples that are linearly dependent over $\bk$ is closed and the polynomial maps are continuous. We call this topology the {\it pair topology} and we prove the following. 

\begin{theorem}\label{MainTh}
Every set definable in the pair $(\Omega,\bk)$ is a boolean combination of pair-closed sets.
\end{theorem}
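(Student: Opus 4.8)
The plan is to reduce the problem to a quantifier elimination statement and then show that each basic definable set produced by that elimination is a boolean combination of pair-closed sets. First I would invoke the quantifier elimination for proper pairs of algebraically closed fields (the result attributed to Keisler and restated in Section~\ref{definable_sets_section}): every formula $\varphi(\bar x)$ in the pair language is equivalent to a boolean combination of formulas of the shape
\[
\exists \bar y\,\bigl(U(y_1)\wedge\cdots\wedge U(y_m)\wedge p(\bar x,\bar y)=0\bigr),
\]
or, in the cleaner ``special'' form one typically extracts, formulas asserting that certain polynomial expressions in $\bar x$ are linearly dependent over $\bk$, conjoined with ordinary Zariski(-closed) conditions on $\bar x$. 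Since the pair topology is by construction the coarsest topology making the polynomial maps continuous and the linear-dependence-over-$\bk$ sets closed, the two families of ``atoms'' appearing in the quantifier-free description are exactly the ones declared closed. So the heart of the argument is to see that after eliminating quantifiers, nothing more complicated than these atoms survives.

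The key steps, in order, would be: \emph{(i)} fix the differentially closed presentation $(\Omega,\bk)$ described in the introduction, so that $\bk$ is the constant field and the Kolchin topology is available; \emph{(ii)} recall the precise normal form given by quantifier elimination and observe that each conjunct is either (a) a Zariski condition $q(\bar x)=0$, which defines a pair-closed set because polynomial maps are continuous and points are closed, or (b) a $\bk$-linear-dependence condition on a tuple of polynomials $\bigl(p_1(\bar x),\dots,p_k(\bar x)\bigr)$, which is the preimage of the (closed) linear-dependence-over-$\bk$ set under a continuous polynomial map, hence pair-closed; \emph{(iii)} conclude that each atom, and therefore each basic definable set in the normal form, is pair-closed, and that the full boolean combination provided by quantifier elimination is precisely a boolean combination of pair-closed sets. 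The containment of Zariski-closed sets in pair-closed sets follows because the pair topology is declared finer than the Zariski topology.

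The main obstacle I anticipate is step~\emph{(ii)}: translating the syntactic atoms of the quantifier-elimination normal form into honest closed sets of the pair topology. In particular one must check that the ``linear dependence over $\bk$'' predicate appearing in Keisler's description coincides set-theoretically with the generating closed set used to define the pair topology (possibly after absorbing a polynomial reparametrization), and that existential quantifiers over $U$ have genuinely been removed rather than merely pushed into these linear-dependence conditions. A subtle point is whether the quantifier-free atoms involve only linear dependence over $\bk$ of polynomial tuples, or also require membership conditions like $U(p(\bar x))$; the latter, ``$p(\bar x)\in\bk$'', is the case $k=1$ of linear dependence (a single nonzero vector is dependent over $\bk$ iff it lies in $\bk$ up to scalar), so it too is a continuous preimage of a pair-closed set, but this identification must be made carefully. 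Once the dictionary between syntactic atoms and pair-closed generators is nailed down, the boolean-combination conclusion is immediate, since the pair-closed sets are closed under the relevant operations only up to boolean combination, which is exactly what the theorem asserts.
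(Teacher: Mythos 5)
Your reduction to a quantifier-elimination normal form assumes more than is actually available, and the gap you yourself flag as ``the main obstacle'' is in fact the entire content of the proof. The result extractable from Keisler (Fact~\ref{near_MC} in the paper, also Theorem 3.8 of \cite{vdDG}) is only a \emph{near} model completeness: every definable set is a boolean combination of sets defined by $\exists y_1\cdots\exists y_m\bigl(\bigwedge_r U(y_r)\wedge\phi(x,y)\bigr)$ with $\phi$ quantifier-free in the ring language. The existential quantifiers ranging over $\bk$ are \emph{not} removed, and there is no normal form whose atoms are merely Zariski conditions on $\bar x$ together with $\bk$-linear-dependence conditions on tuples of polynomials in $\bar x$. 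Your step (ii) treats such a normal form as given; it is not, and producing it is precisely what Theorem~\ref{MainThv2} does.

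Concretely, to show that the set defined by $\exists y\in U^m\,(p_0(x,y)\neq 0\wedge\bigwedge_j p_j(x,y)=0)$ is pair-constructible, the paper writes each $p_j(x,y)=\sum_\iota p_{j\iota}(y)x^\iota$, stratifies $\Omega^n$ by which subfamilies of the monomials $\{\alpha^\iota\}$ are $\bk$-linearly independent, and on each stratum uses the coefficient functions $f_{n,i}$ (shown continuous in Proposition~\ref{f's_continuous}) to push the problem into a definable subset of $\bk^N$, which is then Zariski-constructible by stable embeddedness (Proposition~\ref{closed_subsets_of_k}). None of this machinery appears in your outline, and without it the existential quantifier over $\bk^m$ cannot be discharged: the condition ``there exists $a\in\bk^m$ with $p_j(\alpha,a)=0$ for all $j$ and $p_0(\alpha,a)\neq 0$'' is not on its face a boolean combination of linear-dependence conditions on polynomial tuples in $\alpha$. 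So the proposal identifies the right generators of the topology and the right starting fact, but the bridge between them --- the actual elimination argument --- is missing.
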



\medskip\noindent
In an attempt to prove that the pair-closed sets are exactly the Kolchin closed sets that are definable, we could not overcome the following version of Kolchin Irreducibility Theorem in our setting.

\medskip\noindent
{\bf Question.} Is it true that each pair-irreducible pair-closed set is also Kolchin-irreducible?


\medskip\noindent
We prove the following in Section \ref{definable_sets_section}.

\begin{proposition}\label{main_prop}
If the answer to the question above is affirmative, then each Kolchin-closed definable set is pair-closed.
\end{proposition}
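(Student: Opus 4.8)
The plan is to prove the contrapositive-style structural statement by decomposing an arbitrary Kolchin-closed definable set into its pair-irreducible pieces and then using the hypothesis to transfer irreducibility over to the Kolchin topology.

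The plan is to show that a Kolchin-closed definable set $X$ coincides with its pair-closure, so that it is pair-closed. The starting observations are that the pair topology is coarser than the Kolchin topology and that both are Noetherian. Coarseness holds because the subbasic closed sets of the pair topology are themselves Kolchin-closed: linear dependence over $\bk$ is, in characteristic zero with $\bk$ the constant field, exactly the vanishing of a Wronskian, which is a differential polynomial, and polynomial maps are Kolchin-continuous; hence every pair-closed set is Kolchin-closed, and Noetherianity of the Kolchin topology (Ritt--Raudenbush) passes to the coarser pair topology. In particular every pair-closed set admits a finite irredundant decomposition into pair-irreducible components, and, using Noetherianity together with the fact that the subbasic closed sets are definable, every pair-closed set is itself definable.

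First I would reduce to the pair-irreducible components of the closure. Let $X$ be Kolchin-closed and definable, and let $\overline{X}^{p}=C_{1}\cup\dots\cup C_{m}$ be the decomposition of its pair-closure into pair-irreducible components. By Theorem \ref{MainTh}, $X$ is a boolean combination of pair-closed sets, i.e.\ pair-constructible; since each $C_{i}$ is pair-closed, each $X\cap C_{i}$ is pair-constructible, and $X=\bigcup_{i}(X\cap C_{i})$. Taking pair-closures and using irredundancy of the component decomposition shows that $X\cap C_{i}$ is pair-dense in $C_{i}$ for every $i$. A pair-constructible set that is pair-dense in the pair-irreducible set $C_{i}$ must contain a nonempty pair-open subset $U_{i}$ of $C_{i}$; since the pair topology is coarser than the Kolchin topology, $U_{i}$ is also a nonempty Kolchin-open subset of $C_{i}$, and by construction $U_{i}\subseteq X$.

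Now I would invoke the hypothesis. Assuming the answer to the Question is affirmative, each pair-irreducible $C_{i}$ is Kolchin-irreducible. Suppose, for contradiction, that $C_{i}\not\subseteq X$ for some $i$. Because $X$ is Kolchin-closed, $C_{i}\setminus X$ is a nonempty Kolchin-open subset of $C_{i}$. In a Kolchin-irreducible space any two nonempty open sets meet, so $U_{i}\cap(C_{i}\setminus X)\neq\emptyset$; but $U_{i}\subseteq X$, a contradiction. Hence $C_{i}\subseteq X$ for all $i$, so $\overline{X}^{p}=\bigcup_{i}C_{i}\subseteq X\subseteq\overline{X}^{p}$, giving $X=\overline{X}^{p}$, which is pair-closed.

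The delicate point, and the reason the Question is needed, is the mismatch between the two notions of density: pair-density of $X\cap C_{i}$ in $C_{i}$ does not transfer to Kolchin-density, since the pair-closure is the larger of the two closures, so a naive density comparison fails. The argument circumvents this by using Theorem \ref{MainTh} to extract an honest pair-open, hence Kolchin-open, piece $U_{i}$ sitting inside $X$, and then playing it against the Kolchin-open set $C_{i}\setminus X$. This is exactly where Kolchin-irreducibility of $C_{i}$, the content of the Question, is indispensable: without it one cannot force these two Kolchin-open sets to intersect, and the proof has no other leverage to rule out $C_{i}\setminus X\neq\emptyset$.
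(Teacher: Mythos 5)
Your proof is correct and follows essentially the same route as the paper: both use Theorem \ref{MainTh} to write the relevant set in terms of pair-irreducible closed pieces with pair-open (hence Kolchin-open) parts of $X$ dense in them, invoke the hypothesis to upgrade pair-irreducibility to Kolchin-irreducibility, and then use Kolchin-irreducibility to conclude that the Kolchin-closed set $X$ must contain each full component, so that $X$ equals its pair-closure. The paper phrases this more directly as $X=\overline{X}^{K}=\bigcup_i\overline{C_i\cap U_i}^{K}=\bigcup_i C_i$, whereas you argue by contradiction on the components of $\overline{X}^{p}$; the difference is purely organizational.
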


\medskip\noindent
The complete theory of proper pairs of algebraically closed fields of characteristic zero is $\omega$-stable of Morley rank $\omega$. We investigate the relation of Morley rank with pair topology and among others we prove the following.

\begin{proposition}
Let $X$ be definable in $(\Omega,\bk)$. Then $\Mr(\overline X)=\Mr(X)$.
\end{proposition}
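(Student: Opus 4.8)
The plan is to prove $\Mr(\overline{X}) = \Mr(X)$ by establishing both inequalities. The inequality $\Mr(X) \le \Mr(\overline{X})$ is immediate from monotonicity of Morley rank, since $X \subseteq \overline{X}$ and Morley rank is monotone on definable sets (here one needs that $\overline{X}$ is definable, which should follow from Theorem \ref{MainTh} together with the fact that pair-closed sets are definable). So the real content is the reverse inequality $\Mr(\overline{X}) \le \Mr(X)$, i.e. that passing to the pair-closure does not increase Morley rank.

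For the reverse inequality, the natural approach is to show that $\overline{X} \setminus X$ has Morley rank strictly smaller than $\Mr(X)$, which forces $\Mr(\overline{X}) = \max(\Mr(X), \Mr(\overline{X}\setminus X)) = \Mr(X)$. To control $\overline{X}\setminus X$, I would exploit the description of the pair topology as the coarsest topology making $\bk$-linear-dependence closed and polynomial maps continuous. This means the generators of closed sets are the $\bk$-linear-dependence locus and pullbacks thereof under polynomial maps; the pair-closure of $X$ is obtained by intersecting all such pair-closed sets containing $X$. The key idea is that a point $a$ lying in $\overline{X}$ but not in $X$ should be a limit point that satisfies all the closed conditions satisfied generically on $X$; by the stability-theoretic machinery (the theory is $\omega$-stable of Morley rank $\omega$), such a limit point lies in the Zariski/Kolchin frontier of the types realized on $X$, and the frontier has strictly smaller rank.

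A cleaner route, which I would try first, is to induct on $\Mr(X)$ using the topological closure operator directly. First reduce to the case where $X$ is pair-irreducible of Morley rank $\alpha$, so that $\overline{X}$ is the pair-closure of an irreducible set; then I would argue that the generic type of $\overline{X}$ (the unique type of maximal Morley rank on the closure) is already realized by a point of $X$, because $X$ is pair-dense in $\overline{X}$ and the complement of $X$ in $\overline{X}$ is contained in a proper pair-closed subset. If the complement is contained in a proper pair-closed subset $Y \subsetneq \overline{X}$, then by irreducibility $\Mr(Y) < \Mr(\overline{X})$, and since $X \supseteq \overline{X}\setminus Y$ we get $\Mr(X) \ge \Mr(\overline{X}\setminus Y) = \Mr(\overline{X})$, completing the argument.

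The main obstacle I anticipate is controlling the Morley rank of the frontier $\overline{X}\setminus X$, or equivalently showing that the complement of $X$ inside its closure is contained in a proper pair-closed subset of rank strictly less than $\Mr(\overline{X})$. This is the analogue of the classical fact that an irreducible variety is not the union of finitely many proper subvarieties, but in the pair topology we lack a clean dimension theory and, as the paper's open \textbf{Question} signals, the relationship between pair-irreducibility and Kolchin-irreducibility is not settled. Consequently I would avoid relying on any decomposition theory for the pair topology and instead lean entirely on the $\omega$-stability of the ambient theory: I would express $X$ via its quantifier-elimination normal form (from Section \ref{definable_sets_section}), compute $\Mr(X)$ and $\Mr(\overline{X})$ through the additivity and definability properties of Morley rank in this $\omega$-stable theory, and verify directly that the boundary points added in closure satisfy only lower-rank formulas.
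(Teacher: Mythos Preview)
Your ``cleaner route'' has a genuine circularity. You reduce to $\overline{X}$ irreducible, observe that $\overline{X}\setminus X$ sits inside a proper closed $Y\subsetneq\overline{X}$, and then assert ``by irreducibility $\Mr(Y)<\Mr(\overline{X})$''. But topological irreducibility in the pair topology gives you no Morley-rank inequality; the claim that a proper pair-closed subset of an irreducible pair-closed set has strictly smaller Morley rank is essentially the proposition you are trying to prove, specialised to closed sets. Your induction is on $\Mr(X)$, so you cannot invoke it on $Y$ until you already know $\Mr(Y)<\Mr(X)$. You recognise this obstacle yourself in the last paragraph, but the fallback plan (compute both ranks through the quantifier-elimination normal form and ``verify directly'') is not concrete enough to count as a proof.

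The paper avoids this trap by using Corollary~\ref{closed_sets_determine_types}: since $\tp(a/A)$ is axiomatised by the closed sets it contains, a Morley-generic $a\in X$ lies in some $A$-definable \emph{closed} set $Y$ with $\Mr(Y)=\Mr(a/A)=\Mr(X)$. No comparison between $Y$ and $\overline{X}$ is needed; one only uses $\overline{X\cap Y}\subseteq Y$, whence $\Mr(\overline{X\cap Y})\le\Mr(Y)=\Mr(X)$. After reducing to Morley degree $1$, the remainder $X\setminus Y$ has strictly smaller rank, so the inductive hypothesis handles $\overline{X\setminus Y}$, and $\overline{X}=\overline{X\cap Y}\cup\overline{X\setminus Y}$ finishes. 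The missing ingredient in your outline is precisely this: producing a closed set of the \emph{right} Morley rank from the fact that closed sets determine types, rather than trying to extract rank drops from irreducibility.
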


\medskip\noindent
We also have another notion of dimension coming from a pregeometry called {\it small closure}. Small sets are the images of $\bk^n$ under multivalued functions that are definable in the field $\Omega$ and small closure is the similar to the usual model theoretic algebraic closure: we replace {\it finite} by {\it small}. We explain this in detail in Section \ref{ranks_section}. 

\medskip\noindent
Small dimension refines Morley rank in some way. We prove the version of the proposition above for small dimension.

\section{The topology}\label{topology_sec}

\noindent
Let $\Omega$ be an algebraically closed field of characteristic zero and let $\bk$ be a proper subfield which is also algebraically closed. We study the pair $(\Omega, \bk)$ in the language $L(U)=\{+,\cdot,0,1,U\}$ extending the language $L=\{+,\cdot,0,1\}$ of rings by a unary predicate $U$. We set the word {\it definable} to mean definable in $L(U)$ with parameters; otherwise we specify the language.

\medskip\noindent
As mentioned in the introduction, we may assume that $\Omega$ is equipped with a derivation with which it becomes a differentially closed field and $\bk$ is the constant field of that derivation. 
Then for each $n>0$, $\Omega^n$ has the Kolchin topology on it. Recall that these topologies are noetherian. As usual, most of the times we do not specify $n$ when talking about Kolchin topology and for the ease of nation, we call the Kolchin closed sets as K-closed sets.

\medskip\noindent
Of course, not all K-closed sets are definable in $(\Omega,\bk)$. For instance, the graph of derivation is not definable and a non-definable K-closed subset of $\Omega$ is given by 	
\[
\delta(X)=X^3-X^2.
\]
(We explain why this is not definable in Section \ref{ranks_section}.)
With this in mind, the first attempt to define a topology would be to define the closed sets to be K-closed sets that are definable in $(\Omega,\bk)$. This is a topology, however it is not easy to work with it unless we have more information on how the closed sets look like. 

\medskip\noindent
Before introducing the pair topology, we would like to mention the definable functions appearing in Delon's quantifier elimination result in \cite{Delon}. For $n>0$ and $i\in\{1,\dots,n\}$ the function $f_{n,i}$ is defined as follows:

\begin{align*}
f_{n,i}(\alpha_1,\dots,\alpha_n,\beta)=\gamma \Leftrightarrow& \quad \alpha_1,\dots,\alpha_n\text{ are linearly independent over }\bk,\\
& \quad \beta=a_1\alpha_1+\dots+a_n\alpha_n\text{ for some }a_1,\dots,a_n\in\bk\\
& \quad \text{ and }\gamma=a_i.
\end{align*}

\medskip\noindent
Note that the functions $f_{n,i}$ are partial functions. We might send everything not in the domain to a fixed $t\in\Omega\setminus \bk$, however we do not elaborate on this detail here. For $n>0$, each $f_{n,i}$ have the same domain and we denote it by $X_{n}$.

%

\medskip\noindent
Recall that for $n>0$, the Wronskian is defined as the determinant
\[
W_n(x_1,\dots,x_n)=
\det
 \begin{pmatrix}
  x_1 & x_2 &\cdots & x_n\\
  x_1' & x_2' &\cdots & x_n'\\
  \vdots & \vdots & \ddots & \vdots\\
  x_1^{(n-1)} & x_2^{(n-1)} &\cdots & x_n^{(n-1)}
 \end{pmatrix}.
\]
This is a differential polynomial in indeterminates $x_1,\dots,x_n$ of degree $n$ and order $n-1$. Hence its zero set is a K-closed subset of $\Omega^n$. Note also that its zero set is definable in the pair $(\Omega,\bk)$ since
\[
W_n(\alpha_1,\dots,\alpha_n)=0\Leftrightarrow \alpha_1,\dots,\alpha_n \text{ are linearly dependent over }\bk.
\]
So for our purposes this set should be closed. Let $Y_n$ denote this set.

\medskip\noindent
Other than this, we also want polynomial functions to be continuous. Therefore, we would like the sets $f^{-1}(Y_k)\subseteq\Omega^n$, where $f:\Omega^n\to\Omega^k$ is a polynomial function to be closed. The collection of such sets are not closed under intersection, so we define a {\it basic closed set} to be 
\[
f^{-1}(Y_{k_1}\times\dots\times Y_{k_m}),
\]
where $f:\Omega^n\to\Omega^{k_1+\dots+k_m}$ is again a polynomial function.

\medskip\noindent
These sets are K-closed. Since Kolchin topology is noetherian, finite union of such sets form the closed sets of a topology on $\Omega^n$. This topology is coarser than the Kolchin topology. In particular, it is noetherian, too. Noting that $Y_1=\{0\}$, we conclude also that it refines the Zariski topology. We call this topology the \emph{pair topology} and we write closed, open, irreducible, etc. for topological concepts in pair topology.

\medskip\noindent
Every polynomial map is continuous. In particular, the projection maps are continuous and as a result, the product of pair topologies on $\Omega^m$ and $\Omega^n$ is coarser than the pair topology on $\Omega^{m+n}$.

\medskip\noindent
The domain $X_{n}$ of $f_{n,i}$ is constructible as it is $\big((\Omega^n\setminus Y_n)\times \Omega\big)\cap Y_{n+1}$.

\medskip\noindent
We are aiming to prove that every set definable in the pair is constructible. We first prove that $\bk^n$ is closed.

\begin{lemma}
For each $n>0$, $\bk^n$ is closed.
\end{lemma}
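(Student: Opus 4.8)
The plan is to exhibit $\bk^n$ directly as a \emph{basic closed set}, that is, as the preimage of a product of Wronskian sets $Y_k$ under a single polynomial map. The crucial observation is a characterization of membership in $\bk$ in terms of linear dependence: for $\alpha\in\Omega$, the pair $(1,\alpha)$ is linearly dependent over $\bk$ precisely when $\alpha\in\bk$. Indeed, if $a\cdot 1+b\cdot\alpha=0$ with $a,b\in\bk$ not both zero, then necessarily $b\neq 0$ (otherwise $a=0$ as well), so $\alpha=-a/b\in\bk$; the converse is immediate (take $a=\alpha$, $b=-1$). By the identification $W_2(\alpha_1,\alpha_2)=0\Leftrightarrow\alpha_1,\alpha_2$ linearly dependent over $\bk$ recalled above, this says exactly that $\bk=\{\alpha\in\Omega:(1,\alpha)\in Y_2\}$.

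First I would dispose of the case $n=1$. Consider the polynomial map $f\colon\Omega\to\Omega^2$ given by $f(\alpha)=(1,\alpha)$. By the observation, $f^{-1}(Y_2)=\bk$, so $\bk$ is a basic closed set, hence closed.

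For general $n$ I would promote this to a single polynomial map. Define $g\colon\Omega^n\to\Omega^{2n}$ by
\[
g(\alpha_1,\dots,\alpha_n)=(1,\alpha_1,1,\alpha_2,\dots,1,\alpha_n).
\]
This is a polynomial map, and
\[
g^{-1}\big(\underbrace{Y_2\times\dots\times Y_2}_{n}\big)=\{(\alpha_1,\dots,\alpha_n):(1,\alpha_i)\in Y_2\text{ for each }i\}=\bk^n.
\]
Thus $\bk^n$ is a basic closed set (with $k_1=\dots=k_n=2$), and in particular closed. Alternatively, one could argue more softly: having shown $\bk$ closed in $\Omega$, note that $\bk^n=\bigcap_i\pi_i^{-1}(\bk)$ is closed in the product of the pair topologies on the factors, and that product topology is coarser than the pair topology on $\Omega^n$; hence $\bk^n$ is closed in the latter.

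There is no genuine obstacle here. The only point requiring care is the two-element linear-dependence criterion; once that is in hand, the statement is a one-line consequence of the definition of basic closed sets, and the direct construction via $g$ is arguably cleaner than the detour through the product topology.
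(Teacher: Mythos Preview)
Your proof is correct and takes essentially the same approach as the paper: identify $\bk$ as $f^{-1}(Y_2)$ for the polynomial map $f(\alpha)=(\alpha,1)$ (you use $(1,\alpha)$, which is equivalent since $Y_2$ is symmetric), and then pass to $\bk^n$ via products. Your explicit map $g$ realizing $\bk^n$ as a single basic closed set is a slightly more direct variant of the paper's appeal to ``the product of two closed sets is again closed,'' but the underlying idea is identical.
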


\begin{proof}
Let $f:\Omega\to\Omega^2$ be defined as $f(\alpha)=(\alpha,1)$. Now it is easy to check that $\bk=f^{-1}(Y_2)$.  Since the product of two closed sets is again closed, we get that $\bk^n$ is closed.
\end{proof}

\medskip\noindent
Next lemma gives some more closed sets.

\begin{lemma}
For every $\alpha_1,\dots,\alpha_n\in\Omega$, the set $\bk\alpha_1+\dots+\bk\alpha_n$ is closed.
\end{lemma}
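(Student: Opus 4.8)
The plan is to realize $V=\bk\alpha_1+\dots+\bk\alpha_n$ as the preimage of a single Wronskian zero set under a polynomial map, after first reducing to the case of linearly independent generators.

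First I would carry out the reduction. The set $V$ is the $\bk$-linear span of $\alpha_1,\dots,\alpha_n$, so it is unchanged if we replace these by any maximal $\bk$-linearly independent subsequence. If all $\alpha_i$ vanish then $V=\{0\}=Y_1$ is closed; otherwise, after reindexing, assume $\alpha_1,\dots,\alpha_m$ are linearly independent over $\bk$ with $V=\bk\alpha_1+\dots+\bk\alpha_m$. Thus it suffices to treat linearly independent generators, and I rename $m$ back to $n$.

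The key observation is that when $\alpha_1,\dots,\alpha_n$ are linearly independent over $\bk$, an element $\beta$ lies in $V$ if and only if $\alpha_1,\dots,\alpha_n,\beta$ are linearly dependent over $\bk$. Indeed, if $\beta=\sum a_i\alpha_i$ with $a_i\in\bk$ then the extended tuple is dependent; conversely, any nontrivial relation $\sum c_i\alpha_i+c\beta=0$ must have $c\neq 0$, since $c=0$ would contradict the independence of the $\alpha_i$, and then $\beta=-c^{-1}\sum c_i\alpha_i\in V$. By the characterization recorded just before the lemma, linear dependence over $\bk$ of an $(n+1)$-tuple is exactly the vanishing of $W_{n+1}$, so
\[
V=\{\beta\in\Omega : W_{n+1}(\alpha_1,\dots,\alpha_n,\beta)=0\}.
\]
To finish, let $g:\Omega\to\Omega^{n+1}$ be the polynomial map $g(\beta)=(\alpha_1,\dots,\alpha_n,\beta)$, whose first $n$ components are the constants $\alpha_i$ and whose last component is the identity. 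Then $V=g^{-1}(Y_{n+1})$ is a basic closed set, hence closed.

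The main subtlety, and the reason the reduction step is indispensable, is that applying the map $g$ directly to the original (possibly dependent) generators fails: if $\alpha_1,\dots,\alpha_n$ are already linearly dependent over $\bk$, then every $\beta$ keeps the tuple dependent, so $g^{-1}(Y_{n+1})=\Omega$, which is strictly larger than $V$ in general. Passing first to a $\bk$-basis of $V$ is exactly what repairs this, so I expect no serious obstacle beyond handling that case carefully.
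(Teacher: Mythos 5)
Your proof is correct and follows essentially the same route as the paper: reduce to a maximal $\bk$-linearly independent subset, then realize the span as $g^{-1}(Y_{n+1})$ for the polynomial map $\beta\mapsto(\alpha_1,\dots,\alpha_n,\beta)$. Your justification of why dependence of the extended tuple forces $\beta$ into the span is a detail the paper leaves implicit, but the argument is the same.
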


\begin{proof}
Just note that if $\alpha_1,\dots,\alpha_n$ are linearly independent over $\bk$, then
\[
\bk\alpha_1+\dots+\bk\alpha_n=f^{-1}(Y_{n+1})
\]
for $f(\gamma)=(\alpha_1,\dots,\alpha_n,\gamma)$. 

\medskip\noindent
If $\alpha_1,\dots,\alpha_n$ are not linearly independent, then we may choose a maximal linearly independent set among them. Without loss of generality, let $\alpha_1,\dots,\alpha_m$ be such. Then $\bk\alpha_1+\dots+\bk\alpha_m$ is closed by the previous part and so is $\bk\alpha_1+\dots+\bk\alpha_n$.
\end{proof}

\medskip\noindent
Next we show that the graphs of the functions $f_{n,i}$ are constructible.

\begin{lemma}
For every $n>0$ and $0<i\leq n$, the graph of $f_{n,i}$ is constructible
\end{lemma}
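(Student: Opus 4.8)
The plan is to write the graph $\Gamma\subseteq\Omega^{n+2}$ of $f_{n,i}$, with coordinates $(\alpha_1,\dots,\alpha_n,\beta,\gamma)$, explicitly as the intersection of one open set with two closed sets. Unwinding the definition of $f_{n,i}$, a tuple lies in $\Gamma$ precisely when (i) $\alpha_1,\dots,\alpha_n$ are linearly independent over $\bk$, (ii) $\gamma\in\bk$, and (iii) $\beta=a_1\alpha_1+\dots+a_n\alpha_n$ for some $a_1,\dots,a_n\in\bk$ with $a_i=\gamma$. Condition (i) cuts out the open set $\Omega^n\setminus Y_n$ in the first $n$ coordinates, and condition (ii) is closed by the first lemma, so the work is entirely in turning (iii) into membership in a basic closed set.

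The key observation is that, assuming (i), condition (iii) is equivalent to the single requirement that $\beta-\gamma\alpha_i$ lie in the $\bk$-span of $\{\alpha_j:j\neq i\}$. Indeed, if $\gamma\in\bk$ and $\beta-\gamma\alpha_i=\sum_{j\neq i}b_j\alpha_j$ with $b_j\in\bk$, then $\beta=\gamma\alpha_i+\sum_{j\neq i}b_j\alpha_j$, and the uniqueness of the representation forced by linear independence yields $a_i=\gamma$; the converse is immediate. Moreover, since linear independence of $\alpha_1,\dots,\alpha_n$ forces linear independence of the sub-tuple $\{\alpha_j:j\neq i\}$, the element $\beta-\gamma\alpha_i$ lies in their span if and only if the $n$-tuple $(\alpha_1,\dots,\widehat{\alpha_i},\dots,\alpha_n,\beta-\gamma\alpha_i)$ is linearly dependent over $\bk$, that is, lies in $Y_n$.

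I would then introduce the polynomial maps $g\colon\Omega^{n+2}\to\Omega^n$, $g(\alpha_1,\dots,\alpha_n,\beta,\gamma)=(\alpha_1,\dots,\widehat{\alpha_i},\dots,\alpha_n,\beta-\gamma\alpha_i)$, the projection $p\colon\Omega^{n+2}\to\Omega^n$ onto the first $n$ coordinates, and the projection $h\colon\Omega^{n+2}\to\Omega$ onto the last coordinate, and prove that
\[
\Gamma=\big(\Omega^{n+2}\setminus p^{-1}(Y_n)\big)\cap g^{-1}(Y_n)\cap h^{-1}(\bk).
\]
Here $p^{-1}(Y_n)$ and $g^{-1}(Y_n)$ are basic closed sets and $h^{-1}(\bk)$ is closed by the first lemma, so the right-hand side is a boolean combination of closed sets, giving constructibility. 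The forward inclusion is a direct translation of (i)--(iii) via the observation above; for the reverse inclusion one checks that a point of the right-hand side satisfies (iii), using that a dependence in $g^{-1}(Y_n)$ must involve the last coordinate with nonzero coefficient (otherwise $\{\alpha_j:j\neq i\}$ would be dependent), together with $\gamma\in\bk$ supplied by $h^{-1}(\bk)$.

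The main thing to get right is the verification of this displayed equality, and two points deserve care. First, one must confirm that the output of $g$ has exactly $n$ coordinates, so that $Y_n$ is the correct target. Second, the explicit factor $h^{-1}(\bk)$ cannot be dropped: without it, $g^{-1}(Y_n)$ admits solutions with $\gamma\notin\bk$, so imposing $\gamma\in\bk$ is genuinely needed to recover the uniqueness step that identifies $a_i$ with $\gamma$.
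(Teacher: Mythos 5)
Your proof is correct and follows essentially the same route as the paper: both express the graph as the intersection of the open set where $\alpha_1,\dots,\alpha_n$ are independent, the closed set where the last coordinate lies in $\bk$, and the preimage of $Y_n$ under a polynomial map recording that $\beta-\gamma\alpha_i$ lies in the span of the remaining $\alpha_j$ (the paper uses $\gamma\alpha_1-\beta$ after reducing to $i=1$, a difference only of sign and coordinate order). Your explicit verification of the reverse inclusion, including the observation that the dependence must involve the last coordinate nontrivially, fills in details the paper leaves to the reader.
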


\begin{proof}
Without loss of generality we take $i=1$ and consider the graph of $f_{n,1}$:
\[
\Gamma=\{(\alpha_1,\dots,\alpha_n,\beta,a)\in X_n\times\bk: \beta=a\alpha_1+\sum_{i=2}^n a_i\alpha_i \text{ for some }a_2,\dots,a_n\in\bk\}.
\]
Define $f:\Omega^{n+2}\to\Omega^n$ by 
\[
f(x_1,\dots,x_n,y,z)=(z x_1-y,x_2,\dots,x_n)
\]
Now it is clear that 
\[
\Gamma=\big((\Omega^n\setminus Y_n)\times\Omega\times\bk\big)\cap f^{-1}(Y_n).
\]
It is now clear that the set on the right is constructible and hence the graph of $f_{n,1}$ is constructible.
\end{proof}

\medskip\noindent
It is not clear from this proof whether $f_{n,i}$ is continuous. We show that later in Proposition \ref{f's_continuous}.  We first observe that closed subsets of $\bk^n$ are just Zariski closed subsets. 
%

\begin{proposition}\label{closed_subsets_of_k}
Let $X\subset\bk^n$ be closed. Then X is a Zariski closed subset of $\bk^n$.
\end{proposition}

\begin{proof}
It suffices to prove that a set of the form $\bk^n\cap f^{-1}(Y_m)$ is Zariski closed where $f:\Omega^n\to\Omega^m$ is a polynomial map, because Zariski closed sets are closed under finite unions and intersections.

\medskip\noindent
Let $f=(f_1,\dots,f_m)$. We need to show that the set 
\[
X=\{a\in \bk^n: \sum_{i=1}^m b_i f_i(a)=0 \text{ for some }b_1,\dots,b_m\in\bk\text{ that are not all $0$}\}
\]
is Zariski closed. Let $A=\{t_1,t_2,\dots,t_p\}\subseteq\Omega$ be a $\bk$-linearly independent set such that for each $i$, the coefficients of $f_i$ are from $\bk t_1+\dots+\bk t_p$.

\medskip\noindent
After rearranging we have
\[
X=\left\{a\in\bk^n:g_1(a,b)t_1+\dots+g_p(a,b)t_p=0\text{ for some }b\in\bk^m\setminus\{\vec 0\}\right\}
\]
where $g_1(x,y),\dots,g_p(x,y)$ are polynomials in $\bk[x,y]$ whose degrees in the variable $y_j$ are at most $1$ for every $j$.

\medskip\noindent
Then
\[
X=\left\{a\in\bk^n:g_1(a,b)=\dots=g_p(a,b)=0\text{ for some }b\in\bk^m\setminus\{\vec 0\}\right\}
\]
So $a$ being in this set is equivalent to a certain system of linear equations over $\bk$ (depending on $a$) having a nonzero solution. This last condition is equivalent to certain polynomials over $\bk$ having a common zero at $a$. Therefore $X$ is Zariski closed.
\end{proof}

\begin{corollary}
Proper closed subsets of $\bk$ are finite sets.
\end{corollary}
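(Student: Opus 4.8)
The plan is to derive this as a direct consequence of Proposition \ref{closed_subsets_of_k}. Let $X \subsetneq \bk$ be a proper closed subset. Viewing $X$ as a closed subset of $\bk^1$, the proposition tells us that $X$ is a Zariski closed subset of $\bk$. Since $\bk$ is a field (indeed an algebraically closed field), its only Zariski closed subsets are $\bk$ itself and the finite sets: a Zariski closed subset of $\bk$ is the common zero locus of a collection of polynomials in one variable over $\bk$, and any such collection either consists entirely of the zero polynomial (giving all of $\bk$) or contains a nonzero polynomial (whose zero set is finite).

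So the argument is essentially immediate: First I would invoke Proposition \ref{closed_subsets_of_k} with $n=1$ to conclude that $X$ is Zariski closed in $\bk$. Then I would use the classification of Zariski closed subsets of the affine line over a field, namely that they are exactly the finite subsets together with the whole line. Since $X$ is proper, it is not all of $\bk$, hence it must be finite.

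There is no real obstacle here; the only point requiring a moment's care is to confirm that a proper closed subset of $\bk$ is genuinely a proper \emph{Zariski} closed subset (so that we may exclude the case $X = \bk$), which is clear since $X \subsetneq \bk$ already. One might also note that the hypothesis that $\bk$ is algebraically closed is not strictly needed for the finiteness conclusion, as the classification of Zariski closed subsets of the affine line holds over any field; algebraic closedness is only relevant elsewhere. Thus the corollary follows directly from the proposition and the basic structure of the Zariski topology on $\bk$.
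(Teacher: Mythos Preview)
Your proposal is correct and follows exactly the intended route: the paper states this corollary immediately after Proposition~\ref{closed_subsets_of_k} without proof, and the obvious one-line argument is precisely the one you give---apply the proposition with $n=1$ and use that proper Zariski closed subsets of the affine line are finite.
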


\medskip\noindent
Now we are ready to prove that the functions $f_{n,i}$ are continuous.

\begin{proposition}\label{f's_continuous}
For $n>0$ and $i\in\{1,\dots,n\}$, the function, $f_{n,i}:X_n\to\bk$ is continuous.
\end{proposition}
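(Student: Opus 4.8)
The plan is to lean on the Corollary above, which says that the closed subsets of $\bk$ are exactly $\bk$ itself together with the finite sets. Since taking preimages commutes with finite unions, to prove that $f_{n,i}$ is continuous it suffices to show that $f_{n,i}^{-1}(c)$ is closed in the subspace topology on $X_n$ for each single element $c\in\bk$: the preimage of the whole codomain $\bk$ is $X_n$, which is closed in itself, and the preimage of a finite closed set is the corresponding finite union of such fibers. Moreover, permuting the coordinates $x_1,\dots,x_n$ is a polynomial (indeed linear) homeomorphism of $\Omega^{n+1}$ carrying $X_n$ to $X_n$, and it intertwines $f_{n,i}$ with $f_{n,1}$; so without loss of generality I would treat only $i=1$.

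Fix $c\in\bk$. On $X_n$ the entries $\alpha_1,\dots,\alpha_n$ are linearly independent over $\bk$, so each point $(\alpha_1,\dots,\alpha_n,\beta)\in X_n$ has a unique expression $\beta=a_1\alpha_1+\dots+a_n\alpha_n$ with $a_1,\dots,a_n\in\bk$, and $f_{n,1}(\alpha_1,\dots,\alpha_n,\beta)=a_1$. I would define the polynomial map $g:\Omega^{n+1}\to\Omega^{n}$ by
\[
g(x_1,\dots,x_n,y)=(c x_1-y,\,x_2,\dots,x_n),
\]
which is just the specialization at $z=c$ of the map used earlier to show that the graph of $f_{n,1}$ is constructible. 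The key computation is that on $X_n$ one has $c\alpha_1-\beta=(c-a_1)\alpha_1-a_2\alpha_2-\dots-a_n\alpha_n$; since $\alpha_2,\dots,\alpha_n$ are themselves linearly independent over $\bk$, the $n$-tuple $(c\alpha_1-\beta,\alpha_2,\dots,\alpha_n)$ lies in $Y_n$ if and only if $c\alpha_1-\beta\in\bk\alpha_2+\dots+\bk\alpha_n$, and this happens exactly when the coefficient $c-a_1$ of $\alpha_1$ vanishes, i.e.\ when $a_1=c$. Hence
\[
f_{n,1}^{-1}(c)=X_n\cap g^{-1}(Y_n).
\]

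Because $g^{-1}(Y_n)$ is a basic closed subset of $\Omega^{n+1}$, the right-hand side is closed in the subspace topology on $X_n$, which completes the reduction and thereby the proof. The only step that requires genuine care is the linear-algebra identification of the fiber: one must invoke the linear independence of $\alpha_2,\dots,\alpha_n$ over $\bk$ to pass from ``$c\alpha_1-\beta$ lies in their $\bk$-span'' to ``the $\alpha_1$-coefficient is zero.'' Everything else is a direct specialization of the construction already carried out for the graph of $f_{n,1}$, so I do not expect any obstacle beyond this verification.
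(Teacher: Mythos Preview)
Your proof is correct and follows essentially the same route as the paper: reduce to $i=1$, use the corollary that proper closed subsets of $\bk$ are finite to reduce to fibers over singletons, and identify $f_{n,1}^{-1}(c)$ with $X_n\cap g^{-1}(Y_n)$ for the same polynomial map $g$. If anything, you spell out the linear-algebra verification of the fiber identity more carefully than the paper does.
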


\begin{proof}
We take $i=1$. By the previous corollary, it suffices to prove that the preimages of  singletons are closed in $X_n$. So let $a\in\bk$ and consider $f_{n,1}^{-1}(a)$. This preimage  is 
\[
\{(\alpha_1,\dots,\alpha_n,\beta)\in X_n:a\alpha_1+a_2\alpha_2+\dots+a_n \alpha_n=\beta\text{ for some }a_2,\dots,a_n\in\bk\}.
\]
It now easy to see that this is nothing other than $f^{-1}(Y_n)\cap X_n$ where 
\[f(\alpha_1,\dots,\alpha_n,\beta)=(a\alpha_1-\beta,\alpha_2,\dots,\alpha_n).\]
 So $f_{n,1}^{-1}(a)$ is closed.
\end{proof}

\section{Definable Sets}\label{definable_sets_section}
\noindent
Our aim in this section is to prove that sets definable in the pair $(\Omega,\bk)$ are exactly the constructible sets. 

\medskip\noindent
We first make a reduction to parameter-free sets. 

\begin{lemma}\label{red_parameter_free}
Suppose that for each $n>0$, every subset of $\Omega^n$ that is $\emptyset$-definable is constructible. Then every definable set is constructible. Moreover, if $Y$ is $A$-definable, then $Y$ is a boolean combination of closed sets definable over $A$, provided that $\emptyset$-definable sets are boolean combinations of $\emptyset$-definable closed sets.
\end{lemma}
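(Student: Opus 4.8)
The plan is to reduce to the parameter-free case by the usual device of replacing parameters by variables. Suppose $Y \subseteq \Omega^n$ is $A$-definable, say by a formula $\varphi(x; a)$ of $L(U)$, where $x$ ranges over $\Omega^n$ and $a \in A^m$ is the tuple of parameters. First I would introduce the $\emptyset$-definable set
\[
Z = \{(x, y) \in \Omega^{n+m} : \varphi(x; y)\},
\]
obtained by promoting the parameter tuple $a$ to a tuple $y$ of fresh variables, so that $Y$ is the fibre of $Z$ above $a$.

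The key observation is that this fibre is a preimage under a polynomial map. Writing $g_a \colon \Omega^n \to \Omega^{n+m}$ for the affine map $g_a(x) = (x, a)$, which is in particular polynomial and hence continuous in the pair topology, we have $Y = g_a^{-1}(Z)$. By the first hypothesis $Z$ is constructible, and since $g_a$ is continuous while the operation $g_a^{-1}$ commutes with finite unions, intersections and complements, the preimage of a constructible set is again constructible. Thus $Y$ is constructible, which proves the first assertion.

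For the ``moreover'' part I would invoke the stronger hypothesis to write $Z$ as a boolean combination of $\emptyset$-definable closed sets $C_1, \dots, C_r$. Applying $g_a^{-1}$ and using once more that it commutes with boolean operations, $Y = g_a^{-1}(Z)$ is the same boolean combination of the sets $g_a^{-1}(C_1), \dots, g_a^{-1}(C_r)$. Each $g_a^{-1}(C_i)$ is closed, because $g_a$ is continuous; and each is definable over $A$, because $C_i$ is $\emptyset$-definable while the only parameters entering $g_a$ are the entries of $a$, all of which lie in $A$. Hence $Y$ is a boolean combination of closed sets definable over $A$, as required.

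There is no serious obstacle here: the whole argument rests on the single structural point that the section $g_a(x) = (x, a)$ is a \emph{polynomial} map, so that the continuity of polynomial maps established in Section \ref{topology_sec} makes fibres of closed sets closed and lets us keep exact track of the parameters introduced. Everything else is the formal behaviour of preimages under a fixed map with respect to boolean combinations.
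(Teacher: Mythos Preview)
Your proof is correct and follows essentially the same route as the paper: both introduce the $\emptyset$-definable set obtained by turning the parameters into variables, observe that the section map $x \mapsto (x,a)$ is polynomial and hence continuous in the pair topology, and recover $Y$ as the preimage of the parameter-free set under this map. The handling of the ``moreover'' clause is likewise identical in spirit, tracking that preimages under $g_a$ of $\emptyset$-definable closed sets are $A$-definable closed sets.
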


\begin{proof}
Let $Y\subseteq \Omega^m$ be an arbitrary definable set and let $X\subseteq\Omega^{m+n}$ be a parameter-free definable set and $a\in\Omega^n$ such that $Y=X(a)$.

\medskip\noindent
By assumption $X$ is constructible. 

\medskip\noindent
Consider the polynomial map from $\Omega^m$ to $\Omega^{m+n}$ defined by $f(y)=(y,a)$. It is clear that $Y=f^{-1}(X\cap(\Omega^m\times\{a\}))$. So being the preimage of a constructible set under a continuous map, $Y$ is constructible.

\medskip\noindent
Note that closed and open sets appearing in the proof above are definable over the same parameters as $Y$. Therefore the last part of the proposition follows.
\end{proof}

\medskip\noindent
Before going through some more reductions, we present a quantifier elimination for pairs of algebraically closed fields. This can be obtained from the proof of completeness in \cite{Keisler}, but it also follows from Theorem 3.8 of  \cite{vdDG}.

\begin{fact}\label{near_MC}
Every $\emptyset$-definable subset of $\Omega^n$ is a boolean combination of sets defined by formulas of the form
\[
\exists y_1\cdots\exists y_m\big(\bigwedge_{r=1}^m U(y_r)\wedge\phi(x,y) \big),
\]
where $x$ is an $n$-tuple of variables and $\phi(x,y)$ is a quantifier-free formula in the language of rings. 
\end{fact}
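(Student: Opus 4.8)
The plan is to deduce the statement from the standard semantic test for relative quantifier elimination, using that the theory $T$ of proper pairs of algebraically closed fields of characteristic zero is complete (Keisler). Write $\Phi$ for the set of formulas $\exists\bar y\,(\bigwedge_r U(y_r)\wedge\phi(x,\bar y))$ with $\phi$ quantifier-free in the language of rings; boolean combinations of these are what we must hit. By the usual compactness argument on the space of types it suffices to prove that the complete $\Phi$-type of a tuple (the $\Phi$-formulas it satisfies together with those it refutes) determines its complete $L(U)$-type: if $(\Omega_1,\bk_1),(\Omega_2,\bk_2)\models T$, $a\in\Omega_1^n$, $b\in\Omega_2^n$, and $a,b$ satisfy exactly the same $\Phi$-formulas, then $\tp(a)=\tp(b)$. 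Granting this, a given $\emptyset$-definable set is a union of complete $\Phi$-type classes, and compactness cuts it out by a boolean combination of $\Phi$-formulas.

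To prove the type-determination I would fix a $\kappa$-saturated $(\Omega_2,\bk_2)$ with $\kappa>|\Omega_1|$ and run a parameter-free back-and-forth over the prime field $\Q$ whose maps are the field isomorphisms $g\colon E_1\to E_2$ between finitely generated subfields $E_i\subseteq\Omega_i$ that respect the predicate, $g(E_1\cap\bk_1)=E_2\cap\bk_2$, and that are $\Phi$-compatible, meaning every $\Phi$-formula true of a tuple from $E_1$ holds of its image. The hypothesis that $a$ and $b$ share their $\Phi$-type is exactly what places the initial map $\Q(a)\to\Q(b)$, $a\mapsto b$, in this family: the quantifier-free ring relations over $\Q$ and the conditions $\exists y\,(U(y)\wedge y=p(x))$ are all $\Phi$-formulas, so the map is a well-defined pair-isomorphism respecting $U$. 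Once the family has the back-and-forth extension property, every map in it --- in particular $a\mapsto b$ --- is partial elementary, and $\tp(a)=\tp(b)$ follows.

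The heart is the one-point extension lemma: given $g$ in the family and a new $c\in\Omega_1$, extend $g$ to $\Q(E_1,c)$ inside the family. I would split on whether $c$ is algebraic or transcendental over $E_1$ and, orthogonally, track its relation to $\bk_1$. In the algebraic case, quantifier elimination for $\ACF$ matches the minimal polynomial of $c$ over $E_1$ with a root of its $g$-image in $\Omega_2$, and the $\Phi$-compatibility of $g$ forces the image into $\bk_2$ precisely when $c\in\bk_1$. In the transcendental case, properness of the pair together with $\kappa$-saturation supplies an element of $\Omega_2$ realizing the correct type: transcendental over $E_2$, and transcendental over the compositum $\bk_2 E_2$ exactly when $c$ is transcendental over $\bk_1 E_1$. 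Iterating back and forth exhausts $\Omega_1$ and produces the embedding.

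The step I expect to be the main obstacle is keeping the maps $\Phi$-compatible across the transcendental extensions, i.e.\ guaranteeing that adjoining the image of $c$ introduces no spurious new elements of $\bk_2$ and leaves it sitting over $\bk_2$ exactly as $c$ sits over $\bk_1$. This is the point at which one must know that the $\Phi$-type genuinely records the linear and algebraic ties between the tuple and the small field --- not merely the pure field type --- and it is where the Wronskian and $\bk$-linear-dependence conditions (themselves $\Phi$-formulas) together with properness carry the argument, saturation of the target being indispensable for realizing the required transcendental-over-$\bk_2$ types. This is also precisely the content of the back-and-forth underlying Keisler's completeness proof, so an alternative route is to read the statement off from that argument after observing that it only ever consults the bounded-existential types of the tuples it matches.
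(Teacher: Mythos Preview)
The paper does not prove this statement: it is recorded as a \textbf{Fact} and simply attributed to Keisler's completeness proof \cite{Keisler} and to Theorem~3.8 of \cite{vdDG}, with no argument supplied. So there is no in-paper proof to compare against; your sketch goes well beyond what the paper does, and is in spirit the reconstruction the paper invites when it says the result ``can be obtained from the proof of completeness in \cite{Keisler}''.

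On the sketch itself: the strategy (show that the $\Phi$-type of a tuple determines its complete $L(U)$-type, via a back-and-forth over $\Phi$-compatible partial pair-isomorphisms) is the right one and is essentially Keisler's argument read with an eye to which formulas are actually consulted. Two small points. First, for a genuine back-and-forth you want both models $\kappa$-saturated, not just $(\Omega_2,\bk_2)$; otherwise you are only doing ``forth'' and must separately argue the resulting embedding is elementary. Passing to saturated elementary extensions on both sides fixes this without affecting types. Second, you correctly identify the live issue: preserving $\Phi$-compatibility across the one-point extension, especially in the transcendental case, is where the content is, and your write-up does not actually carry that verification out --- it gestures at linear-dependence conditions and then defers to Keisler. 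That is fine for a proposal, and it is exactly what the paper itself does, but if you want a self-contained proof you will need to spell out that for $c$ transcendental over $E_1$ one chooses $d$ transcendental over $\bk_2 E_2$ (resp.\ algebraic over $\bk_2 E_2$ of the matching type) according to the position of $c$ relative to $\bk_1 E_1$, and then check that every $\Phi$-formula over $E_1(c)$ reduces, after clearing denominators, to one over $E_1$ with an extra ring variable, so that compatibility propagates.
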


\medskip\noindent
Quantifier-free formulas in the language of rings in variables $(x,y)$ are equivalent to formulas of the form
\[
\bigvee_{i=1}^k(\bigwedge_{j=1}^{s_i}p_{ij}(x,y)=0\wedge \bigwedge_{j=1}^{t_i}q_{ij}(x,y)\neq 0),
\]
where $p_{ij}$'s and $q_{ij}$'s are polynomials over $\Q$.
However, in the theory of (algebraically closed) fields, the last part is equivalent to
\[
 \prod_{j=1}^{t_i}q_{ij}(x,y)\neq 0
\]
Therefore, using Lemma \ref{red_parameter_free} and Fact \ref{near_MC}, in order to show that every $\emptyset$-definable set is constructible, we need to show that sets defined by formulas of the form
\[
\exists y_1\cdots\exists y_m\left(\bigwedge_{r=1}^m U(r_i)\wedge\bigvee_{i=1}^k  \big(q_{i}(x,y)\neq 0\wedge \bigwedge_{j=1}^{s_i}p_{ij}(x,y)=0\big)\right)
\]
are constructible.

\medskip\noindent
Finally, this last formula is equivalent to 
\[
\bigvee_{i=1}^k\exists y_1^{(i)}\cdots\exists y_m^{(i)}\left(\bigwedge_{r=1}^m U(y_r^{(i)})\wedge \big(q_{i}(x,y^{(i)})\neq 0\wedge \bigwedge_{j=1}^{s_i}p_{ij}(x,y^{(i)})=0\big)\right)
\]
So we only consider sets defined by formulas
\begin{equation}\label{basic-formula}
\exists y_1\cdots\exists y_m\big(\bigwedge_{r=1}^m U(y_r)\wedge p_0(x,y)\neq 0\wedge \bigwedge_{j=1}^{s}p_{j}(x,y)=0\big),
\end{equation}
where $x=(x_1,\dots,x_n)$ and $y=(y_1,\dots,y_m)$ are tuples of variable and $p_j$ is a polynomial over $\Q$ for each $j$.

\medskip\noindent
Now the following finishes the proof of Theorem \ref{MainTh} in a stronger way.

\begin{theorem}\label{MainThv2}
Let $X\subseteq \Omega^n$ be defined by a formula of the form (\ref{basic-formula}).
Then $X$ is a boolean combination of $\emptyset$-definable closed sets.
\end{theorem}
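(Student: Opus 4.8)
The plan is to read the formula (\ref{basic-formula}) fiberwise: for $\alpha\in\Omega^n$ let $V_\alpha\subseteq\Omega^m$ be the constructible set cut out in the variable $y$ by $p_1(\alpha,y)=\dots=p_s(\alpha,y)=0$ together with $p_0(\alpha,y)\neq 0$, so that $\alpha\in X$ if and only if $V_\alpha$ has a point in $\bk^m$. Expanding each $p_j$ in the $y$-monomials, I write $p_j(x,y)=\sum_{\mu\in M}c_{j,\mu}(x)\,y^\mu$ with $c_{j,\mu}\in\Q[x]$ and $M$ a common finite set of monomials; put $E=\{(j,\mu)\}$ and let $c\colon\Omega^n\to\Omega^{|E|}$, $\alpha\mapsto(c_{j,\mu}(\alpha))_{(j,\mu)\in E}$, be the associated polynomial map. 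The key point is that for $y\in\bk^m$ every monomial value $y^\mu$ lies in $\bk$, so each condition $p_j(\alpha,y)=0$ is a $\bk$-linear relation among the coordinates of $c(\alpha)$. Thus everything is governed by the $\bk$-linear algebra of the finite tuple $c(\alpha)$, and the goal is to make this uniform in $\alpha$.

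First I would stratify $\Omega^n$ according to the $\bk$-linear dependence pattern of $c(\alpha)$. For a subset $S\subseteq E$, let $\Omega_S$ be the set of $\alpha$ for which $\{c_{j,\mu}(\alpha):(j,\mu)\in S\}$ is a maximal $\bk$-linearly independent subset of $\{c_{j,\mu}(\alpha):(j,\mu)\in E\}$. Since $\bk$-linear dependence of a tuple is exactly membership in the corresponding Wronskian zero set $Y_k$, each $\Omega_S$ is a boolean combination of preimages of the sets $Y_k$ under coordinate restrictions of $c$; hence each $\Omega_S$ is $\emptyset$-definable and constructible, and finitely many of them cover $\Omega^n$. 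Fix such an $S$ with $|S|=e$, enumerate $S=\{(j_1,\mu_1),\dots,(j_e,\mu_e)\}$, and set $\beta_l(\alpha)=c_{j_l,\mu_l}(\alpha)$. On $\Omega_S$ the $\beta_l(\alpha)$ form a $\bk$-basis of the span of $c(\alpha)$, so each coordinate satisfies $c_{j,\mu}(\alpha)=\sum_{l=1}^e a_{j,\mu,l}(\alpha)\,\beta_l(\alpha)$ with $a_{j,\mu,l}(\alpha)\in\bk$; for $(j,\mu)\notin S$ these scalars are $a_{j,\mu,l}(\alpha)=f_{e,l}\big(\beta_1(\alpha),\dots,\beta_e(\alpha),c_{j,\mu}(\alpha)\big)$, and for $(j,\mu)\in S$ they are constants. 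By Proposition \ref{f's_continuous} and continuity of $c$, the resulting map $a_S\colon\Omega_S\to\bk^{|E|\cdot e}$ is continuous and $\emptyset$-definable; this is the technical core.

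Next, on $\Omega_S$ I would use $\bk$-linear independence of the $\beta_l(\alpha)$ to split each equation coordinatewise. Writing $\tilde p_{j,l}(y)=\sum_\mu a_{j,\mu,l}(\alpha)\,y^\mu\in\bk[y]$, we have $p_j(\alpha,y)=\sum_l\beta_l(\alpha)\tilde p_{j,l}(y)$, so $p_j(\alpha,y)=0$ iff $\tilde p_{j,l}(y)=0$ for all $l$, while $p_0(\alpha,y)\neq 0$ iff $\tilde p_{0,l}(y)\neq 0$ for some $l$. Hence for $\alpha\in\Omega_S$ the condition $\alpha\in X$ is precisely the solvability in $\bk^m$ of the system $\bigwedge_{j\geq 1,\,l}\tilde p_{j,l}(y)=0\wedge\bigvee_l\tilde p_{0,l}(y)\neq 0$, which depends only on $a_S(\alpha)$. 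Since $\bk$ is algebraically closed, quantifier elimination in $\ACF$ converts this solvability into a Zariski-constructible condition $C$, defined over $\Q$ (the monomial exponents are fixed integers, so no parameters enter), on the coordinate tuple; that is, $X\cap\Omega_S=a_S^{-1}(C)$.

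Finally I would assemble the pieces. By Proposition \ref{closed_subsets_of_k} the $\Q$-Zariski-closed sets cut out inside $\bk^{|E|\cdot e}$ are closed, so $C$ is a boolean combination of $\emptyset$-definable closed sets; pulling back along the continuous $\emptyset$-definable map $a_S$ shows that $X\cap\Omega_S$ is a boolean combination of $\emptyset$-definable closed sets, relative to $\Omega_S$, which is itself of this form. Taking the union over the finitely many $S$ then yields the theorem. The main obstacle is the middle step: arranging the stratification so that a $\bk$-basis of the coefficient span can be chosen \emph{definably and with continuously varying coordinates}. This is exactly where the design of the topology pays off, as $\bk$-linear dependence is detected by the closed sets $Y_k$ and the coordinate functions are the continuous maps $f_{n,i}$.
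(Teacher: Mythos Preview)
Your proof is correct and follows essentially the same strategy as the paper: stratify $\Omega^n$ by a $\bk$-linear dependence pattern, use the continuous functions $f_{n,i}$ to pass to $\bk$-valued coordinates, and then invoke quantifier elimination in $\ACF$ (equivalently, Proposition~\ref{closed_subsets_of_k}) on the resulting subset of a power of $\bk$. The only difference is that you expand each $p_j$ in the $y$-monomials (with coefficients $c_{j,\mu}(x)\in\Q[x]$) and stratify by the $\bk$-span of the tuple $c(\alpha)$, whereas the paper expands in the $x$-monomials (with coefficients $p_{j\iota}(y)\in\Q[y]$) and stratifies, separately for each $j$, by the $\bk$-span of the monomials $\alpha^\iota$; since the quantified variable $y$ ranges over $\bk$, either expansion makes the complementary factors $\bk$-scalars, and the two arguments are dual to one another.
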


\begin{proof}
Let $X$ be defined by 
\[
\exists y_1\cdots\exists y_m\big(\bigwedge_{r=1}^m U(y_r)\wedge p_0(x,y)\neq 0\wedge \bigwedge_{j=1}^{s}p_{j}(x,y)=0\big).
\]

\medskip\noindent
For $j=0,1,\dots,s$, write 
\[
p_j(x,y)=\sum_{\iota\in I_j} p_{j\iota}(y) x^\iota,
\]
where $I_j$ is a finite set of multi-indices and $p_{j\iota}\in\Q[y]$ for each $\iota\in I_j$. 

\medskip\noindent
Let $K_j\subseteq I_j$ for each $j$ and let  $K=K_0\times K_1\times\dots\times K_s$. We define $S_K$ to be the set of $\alpha\in\Omega^n$ such that for each $j$ the set $\{\alpha^\iota:\iota\in K_j\}$ is linearly independent over $\bk$ and $\alpha^{\iota'}$ is in the $\bk$-linear space generated by $\{\alpha^\iota:\iota\in K_j\}$ for $\iota'\in I_j\setminus K_j$.

\medskip\noindent
Note that 
\[
X=\bigcup_K X\cap S_K
\]
where $K$ runs through the subsets of $I_0\times I_1\times\dots\times I_s$ of the form $K=K_0\times K_1\times\dots\times K_s$
Therefore it suffices to show that given such $K$ the set 
\[
X_K:=X\cap S_{K}
\]
is constructible.

\medskip\noindent
Let $k_j=|K_j|$ and $l_j=|I_j|$, and enumerate $K_j$ as 
\[
K_j=\{\iota_{j1},\dots,\iota_{jk_j}\}
\]
Let $\alpha\in S_{K}$. Then for $j=0,\dots,s$ and $\iota\in I_j$ write
\[
\alpha^{\iota}=\sum_{k=1}^{k_j} f_{k_j,k}(\alpha^{\iota_{j1}},\dots,\alpha^{\iota_{jk_j}},\alpha^{\iota})\alpha^{\iota_{jk}}
\]
As a result, for $\alpha\in S_K$ we get that
\[
p_j(\alpha,y)=\sum_{k=1}^{k_j} \sum_{\iota\in I_j} p_{j\iota}(y)f_{k_j,k}(\alpha^{\iota_{j1}},\dots,\alpha^{\iota_{jk_j}},\alpha^{\iota})\alpha^{\iota_{jk}}
\]

\medskip\noindent
So elements of $X_K$ are exactly $\alpha\in\Omega^n$ such that there is $a\in\bk^m$ with the property that for every $j=1,\dots,s$ and $k=1,\dots,l_j$
\[
\sum_{\iota\in I_j}p_{j\iota}(a)f_{l_j,k}(\alpha^{\iota_{j1}},\dots,\alpha^{\iota_{jk_j}},\alpha^{\iota})=0
\]
and 
\[
\sum_{\iota\in I_0}p_{0\iota}(a)f_{l_0,k'}(\alpha^{\iota_{01}},\dots,\alpha^{\iota_{0k_0}},\alpha^{\iota})\neq 0
\]
for some $k'\in\{1,\dots,k_0\}$.

\medskip\noindent
Now consider the definable set $Z$ containing $(A_{j,k,\iota})\in\bk^{k_0l_0+k_1l_1+\dots+k_sl_s}$ such that there is $a\in\bk^m$ with the property that
\[
\sum_{\iota\in I_0}p_{0\iota}(a)A_{0,k',\iota}\neq 0
\]
for some $k'\in\{1,\dots,k_0\}$ and 
\[
\sum_{\iota\in I_j}p_{j\iota}(a)A_{j,k,\iota}=0
\]
for each $j=1,\dots,s$ and $k=1,\dots,k_j$.

\medskip\noindent
Being a definable subset of $\bk^{k_0l_0+k_1l_1+\dots+k_sl_s}$, $Z$ is constructible by Proposition \ref{closed_subsets_of_k}.  

\medskip\noindent
Now define $f:S_K\to \bk^{k+0l_0+k_1l_1+\dots+k_sl_s}$ as follows
\[
f(\alpha)=(f_{k_j,k}(\alpha^{\iota_{j1}},\dots,\alpha^{\iota_{jk_j}},\alpha^{\iota}):j=0,\dots,s,k=1,\dots,k_j, \iota\in I_j)
\]
Now $f$ is continuous by Proposition \ref{f's_continuous}. So $X_K$ is constructible as it is $f^{-1}(Z)$.
\end{proof}

\noindent
Taking the parameters into consideration we have the following consequences.

\begin{corollary}\label{closed_sets_determine_types}
Let $(\Omega,\bk)$ be $\kappa$-saturated and let $A\subseteq \Omega$ be of cardinality less than $\kappa$ and $a\in\Omega^n$. Then $\tp(a/A)$ is determined by the closed sets in it.
\end{corollary}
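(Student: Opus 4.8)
The plan is to show that in a $\kappa$-saturated pair $(\Omega,\bk)$, the closed sets containing $a$ (definable over $A$) pin down $\tp(a/A)$ by separating any realization of a \emph{different} type. The key input is Theorem~\ref{MainThv2} together with Lemma~\ref{red_parameter_free}: every $A$-definable set is a boolean combination of closed sets definable over $A$. So I would first record that the collection of $A$-definable \emph{closed} sets, together with their complements, generates (under finite boolean operations) the full boolean algebra of $A$-definable sets.

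The argument itself is a standard separation-of-types computation. Suppose $b\in\Omega^n$ satisfies exactly the same $A$-definable closed conditions as $a$; that is, for every closed set $C$ definable over $A$ we have $a\in C \Leftrightarrow b\in C$. I claim $\tp(b/A)=\tp(a/A)$. Take any $A$-definable set $Y$ with $a\in Y$; by the previous paragraph $Y$ is a finite boolean combination of $A$-definable closed sets $C_1,\dots,C_r$. Since membership of $a$ and of $b$ agrees on each $C_i$, and membership in a boolean combination is determined by membership in the constituent sets, we get $a\in Y \Leftrightarrow b\in Y$. As this holds for every $A$-definable $Y$, we conclude $b\models\tp(a/A)$, which is what we wanted.

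The role of $\kappa$-saturation (with $|A|<\kappa$) is to guarantee that ``$a$ and $b$ agree on all $A$-definable closed sets'' is a consistent, realizable condition to impose rather than a vacuous one: saturation ensures that the partial type consisting of the closed sets through $a$ and the complements of the closed sets missing $a$ is realized, so the closed sets genuinely carry enough information. In fact the cleanest phrasing is the contrapositive: if $\tp(a/A)\neq\tp(b/A)$, some $A$-definable $Y$ separates them, hence some $A$-definable closed $C_i$ among its boolean constituents separates them. I would present it in this contrapositive form to avoid any fuss about realizability.

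I do not expect a genuine obstacle here; the content is entirely carried by Theorem~\ref{MainThv2} and the last sentence of Lemma~\ref{red_parameter_free} (which explicitly tracks that the closed sets in the boolean combination are definable over the same parameters $A$). The only point requiring a line of care is the passage from ``agreeing on the closed generators'' to ``agreeing on arbitrary boolean combinations,'' which is the elementary fact that a boolean algebra homomorphism is determined by its values on a generating set; equivalently, the indicator of any boolean combination is a fixed boolean function of the indicators of the generators, so equal inputs force equal outputs. Saturation is invoked only to make the notion of ``the closed sets in $\tp(a/A)$'' substantive, and it is used exactly as in standard type-counting arguments.
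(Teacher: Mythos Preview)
Your argument is correct and is exactly the intended one: the paper states this as an immediate corollary of Theorem~\ref{MainThv2} together with the parameter-tracking clause of Lemma~\ref{red_parameter_free}, and your separation-of-types computation is the straightforward unpacking of that implication.

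One small remark: your paragraph on $\kappa$-saturation is unnecessary and slightly misleading. The statement concerns a fixed $a\in\Omega^n$ and asserts that the $A$-definable closed sets containing $a$ already determine $\tp(a/A)$; no realizability issue arises, and the boolean-combination argument goes through regardless of saturation. The saturation hypothesis in the paper is a standing convenience (so that one may freely speak of types and generics later), not an ingredient of this particular corollary.
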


\begin{corollary}\label{closure_parameters}
Let $X$ be $A$-definable. Then $\overline X$ is also $A$-definable.
\end{corollary}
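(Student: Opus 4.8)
The plan is to upgrade the mere definability of $\overline{X}$ to definability over $A$ by an automorphism-invariance argument, in the same spirit in which one upgrades parameter-free results to results with parameters inside a saturated homogeneous model. Matching the setup of Corollary \ref{closed_sets_determine_types}, I would work inside a $\kappa$-saturated and strongly $\kappa$-homogeneous $(\Omega,\bk)$ with $|A|<\kappa$. The first (easy) step is to record that $\overline{X}$ is definable at all: by definition $\overline{X}$ is closed, hence a finite union of basic closed sets $f^{-1}(Y_{k_1}\times\dots\times Y_{k_m})$; each $Y_k$ is $\emptyset$-definable and each $f$ is a polynomial map over finitely many parameters, so $\overline{X}$ is definable over some finite parameter set $B$.

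The key step is to show that \emph{every} automorphism $\sigma$ of the structure $(\Omega,\bk)$ is a homeomorphism for the pair topology. Indeed, $\sigma$ fixes the prime field $\Q$ and preserves the predicate $U$, hence preserves $\bk$ and the relation of $\bk$-linear dependence; therefore $\sigma(Y_n)=Y_n$ for every $n$. Moreover $\sigma$ conjugates a polynomial map $f$ to the polynomial map $f^{\sigma}$ obtained by applying $\sigma$ to the coefficients of $f$, so that $\sigma\big(f^{-1}(Y_{k_1}\times\dots\times Y_{k_m})\big)=(f^{\sigma})^{-1}(Y_{k_1}\times\dots\times Y_{k_m})$ is again a basic closed set. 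Thus $\sigma$ carries closed sets to closed sets, and since the same applies to $\sigma^{-1}$, the map $\sigma$ is a homeomorphism; in particular $\sigma(\overline{Z})=\overline{\sigma(Z)}$ for every set $Z$.

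Finally I would take an arbitrary $\sigma\in\operatorname{Aut}\big((\Omega,\bk)/A\big)$. Since $X$ is $A$-definable we have $\sigma(X)=X$, whence $\sigma(\overline{X})=\overline{\sigma(X)}=\overline{X}$. Thus the definable set $\overline{X}$ is invariant under every automorphism fixing $A$ pointwise, and in a saturated strongly homogeneous model this forces $\overline{X}$ to be $A$-definable, as desired. The genuinely delicate points to be careful about are the homeomorphism claim (specifically, that closed sets built from polynomial maps with arbitrary coefficients are still permuted by $\sigma$, which is exactly what the computation with $f^{\sigma}$ secures) and the standing saturation assumption: one must either take $(\Omega,\bk)$ sufficiently saturated and homogeneous from the outset, as in Corollary \ref{closed_sets_determine_types}, or note that passing to such an elementary extension does not change whether $\overline{X}$ is cut out over $A$, since by Theorem \ref{MainThv2} and the \emph{moreover} clause of Lemma \ref{red_parameter_free} the relevant closed sets are given uniformly by $L(U)$-formulas over $A$.
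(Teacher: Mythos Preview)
Your argument is correct. The paper records this as an immediate corollary of Theorem~\ref{MainThv2} (together with the ``moreover'' clause of Lemma~\ref{red_parameter_free}) and gives no separate proof, and your automorphism-invariance argument is precisely the standard way to unpack that corollary: definability of $\overline X$ comes for free from the description of closed sets, the homeomorphism claim for $\sigma\in\operatorname{Aut}((\Omega,\bk))$ is exactly right (the only point to check is $\sigma(f^{-1}(Y_{k_1}\times\cdots\times Y_{k_m}))=(f^{\sigma})^{-1}(Y_{k_1}\times\cdots\times Y_{k_m})$, which you handle), and the passage from $\operatorname{Aut}/A$-invariance of a definable set to $A$-definability in a sufficiently saturated homogeneous model is the usual compactness argument. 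Your closing remark about transferring back from a saturated extension is also fine: a formula that cuts out a finite union of basic closed sets does so uniformly in every model, so the $A$-formula you obtain for $\overline{X'}$ in the extension still defines $\overline X$ in the original $(\Omega,\bk)$.
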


\medskip\noindent
Combining Theorem \ref{MainThv2} and Proposition \ref{closed_subsets_of_k}, we get the following stable embeddedness result which is folklore.

\begin{corollary}\label{subsets_of_k}
A subset of $\bk^n$ is definable in the pair if and only if it is definable in the field $\bk$.
\end{corollary}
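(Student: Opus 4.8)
The plan is to prove the two implications separately, the converse being a routine relativization and the forward implication the substantive one. For the converse, suppose $X\subseteq\bk^n$ is definable in the field $\bk$ by a ring-language formula $\psi(x)$ with parameters from $\bk$. I would relativize every quantifier of $\psi$ to the predicate $U$, producing an $L(U)$-formula $\psi^U(x)$, and observe that $\psi^U(x)\wedge\bigwedge_{i=1}^n U(x_i)$ defines $X$ in the pair: for a tuple from $\bk^n$ the value of $\psi^U$ computed in $(\Omega,\bk)$ coincides with that of $\psi$ computed in the substructure $\bk$, by a straightforward induction on the complexity of $\psi$. Hence $X$ is definable in the pair.

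For the forward implication, suppose $X\subseteq\bk^n$ is definable in the pair over a parameter set $A\subseteq\Omega$. By Theorem \ref{MainTh}, together with the parameter reduction of Lemma \ref{red_parameter_free}, $X$ is a boolean combination of pair-closed sets, say built from closed sets $C_1,\dots,C_r$ definable over $A$. Since $X\subseteq\bk^n$ we have $X=X\cap\bk^n$, and intersecting the entire boolean combination with $\bk^n$ exhibits $X$ as the same boolean combination (taken relative to $\bk^n$) of the sets $C_i\cap\bk^n$. Each $C_i\cap\bk^n$ is closed in the subspace topology on $\bk^n$, so by Proposition \ref{closed_subsets_of_k} it is Zariski closed. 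Thus $X$ is a constructible subset of $\bk^n$, and quantifier elimination for algebraically closed fields yields that $X$ is definable in the field $\bk$.

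The step demanding the most care is the tracking of parameters in the forward direction: the sets $C_i$ a priori carry parameters from $\Omega$, yet the conclusion requires a field-definition using parameters from $\bk$ alone. What must be checked is that the Zariski closed sets $C_i\cap\bk^n$ are defined over $\bk$ rather than merely over $\Omega$. This is precisely what the proof of Proposition \ref{closed_subsets_of_k} provides: there the coefficient data of the polynomial map is expanded along a $\bk$-basis $t_1,\dots,t_p$, the resulting equations $g_1=\dots=g_p=0$ have coefficients in $\bk$, and the vanishing-minor condition describing the locus where the associated homogeneous system admits a nonzero solution over $\bk$ is given by polynomials over $\bk$. With this observation the two implications combine to give the stated equivalence.
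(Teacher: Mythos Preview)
Your argument is correct and matches the paper's approach: the paper simply says the corollary follows by combining Theorem~\ref{MainThv2} with Proposition~\ref{closed_subsets_of_k}, and you have spelled out exactly that combination. Your explicit treatment of parameters---observing that the proof of Proposition~\ref{closed_subsets_of_k} produces defining polynomials over $\bk$ rather than merely over $\Omega$---is a point the paper leaves implicit, so your write-up is in fact more careful on this issue.
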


%

\subsection{K-Closed versus Closed}\label{K-irreducible_section}

\medskip\noindent
The well-known Kolchin Irreducibility Theorem states that a Zariski closed set in a differentially closed field is Kolchin irreducible if it is Zariski irreducible. As we have mentioned in the introduction, we need the answer to the following version of this theorem to be affirmative for the next proposition. 

\medskip\noindent
{\bf Question.} Is it correct that any irreducible closed set K-irreducible?

%
%

\medskip\noindent
Following is Proposition \ref{main_prop} from the introduction.

\begin{proposition}
Suppose that the answer to the question above is affirmative and let $X\subseteq \Omega^n$ be definable in $(\Omega,\bk)$. Then $X$ is closed if and only if it is K-closed.
\end{proposition}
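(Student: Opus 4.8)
The plan is to prove both directions, noting that one is essentially immediate and the other is where the hypothesis on the Question does the work.

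First I would dispose of the easy direction: if $X$ is closed, then $X$ is K-closed. This is immediate from the construction of the pair topology. By definition, closed sets are finite unions of basic closed sets $f^{-1}(Y_{k_1}\times\dots\times Y_{k_m})$ with $f$ a polynomial map, and each such set was already observed in Section \ref{topology_sec} to be K-closed (the $Y_k$ are zero sets of Wronskians, hence K-closed, and polynomial maps are Kolchin-continuous, and K-closed sets are closed under finite unions). So the content is entirely in the converse.

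For the converse, suppose $X$ is K-closed and definable in $(\Omega,\bk)$; I want to show $X$ is closed. The strategy is to use the noetherianity of the pair topology to reduce to the irreducible case. Since $X$ is definable, Theorem \ref{MainThv2} (together with Lemma \ref{red_parameter_free}) tells us $X$ is constructible, i.e. a boolean combination of closed sets. In particular $X$ has a well-defined pair-closure $\overline{X}$, and I would write $\overline{X}=Z_1\cup\dots\cup Z_r$ as its finite decomposition into pair-irreducible closed components, which exists because the pair topology is noetherian. The goal is to show $\overline{X}=X$. By the hypothesis that the answer to the Question is affirmative, each $Z_i$, being pair-irreducible and pair-closed, is also K-irreducible; and each $Z_i$ is K-closed by the easy direction already established. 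Now $\overline{X}$ is the pair-closure of the K-closed set $X$, and since pair-closed implies K-closed, $\overline{X}\supseteq X$ while $X$ is itself K-closed; the aim is to leverage irreducibility to force equality component by component. On each K-irreducible component $Z_i$, the set $X\cap Z_i$ is a definable subset that is K-closed, and its pair-closure is all of $Z_i$; by K-irreducibility of $Z_i$, if $X\cap Z_i$ were a proper K-closed subset it would have strictly smaller Kolchin dimension, yet it is pair-dense in $Z_i$. The point I would push is that pair-density of a K-closed set inside a K-irreducible K-closed set forces it to be the whole set, so $X\cap Z_i=Z_i$, whence $X\supseteq\overline{X}$ and thus $X=\overline{X}$ is closed.

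The main obstacle is making rigorous the comparison between pair-closure and Kolchin-closure on a single irreducible component, i.e. the claim that a K-closed pair-dense subset of a K-irreducible set must be the whole set. This is exactly where the transfer of irreducibility granted by the Question is indispensable: without knowing $Z_i$ is K-irreducible we could not conclude that a proper K-closed subset is ``small'', and the argument would collapse. I would need to verify carefully that the pair-closure operation and the Kolchin-closure operation interact correctly here — concretely, that $\overline{X}$ (the pair-closure) is K-closed and that $X$ being K-closed and pair-dense in a K-irreducible set $Z_i$ leaves no room for $X\cap Z_i$ to be K-closed and proper. I expect the remaining bookkeeping — the reduction to irreducible components and the application of the easy direction to the $Z_i$ — to be routine given noetherianity.
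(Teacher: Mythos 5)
Your easy direction and the overall reduction via Theorem \ref{MainThv2} and noetherianity are fine, but the crucial step of the converse has a genuine gap, and it is exactly the one you flag as the ``main obstacle'' without closing it. You claim that a K-closed, pair-dense subset of a K-irreducible pair-closed set $Z_i$ must be all of $Z_i$. This does not follow from K-irreducibility. Since the pair topology is coarser than the Kolchin topology, the pair-closure of a set \emph{contains} its K-closure and can be strictly larger; so a proper K-closed subset $A\subsetneq Z_i$ can perfectly well satisfy $\overline{A}^{\,\mathrm{pair}}=Z_i$ unless you already know $A$ is pair-closed --- which is precisely what the proposition is trying to establish. K-irreducibility only forbids writing $Z_i$ as a union of two proper K-closed subsets, and ``smaller Kolchin dimension'' of $A$ is not in tension with pair-density, since nothing relates pair-closure to Kolchin dimension. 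So as written the argument is circular at its key point.

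The paper's proof avoids this by decomposing $X$ itself rather than $\overline{X}$: write $X=(C_1\cap U_1)\cup\dots\cup(C_m\cap U_m)$ with each $C_i$ pair-closed and pair-irreducible (discarding empty pieces). By the affirmative answer to the Question each $C_i$ is K-irreducible, and each $U_i$ is K-open because its complement is pair-closed, hence K-closed. Then $C_i\cap U_i$ is a nonempty K-open subset of the K-irreducible set $C_i$, hence K-dense in it, so $\overline{C_i\cap U_i}^{K}=C_i$. Since $X$ is K-closed, $X=\overline{X}^{K}=\bigcup_i\overline{C_i\cap U_i}^{K}=\bigcup_i C_i$, which is pair-closed. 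The essential difference is that the density statement is applied in the \emph{Kolchin} topology to a set that is visibly K-open in $C_i$, where ``nonempty open subsets of an irreducible space are dense'' is a genuine theorem; your version tries to run the density argument across the two topologies in the wrong direction. You could repair your write-up by replacing the component-by-component comparison on $\overline{X}=Z_1\cup\dots\cup Z_r$ with this computation of $\overline{X}^{K}$ from the constructible presentation of $X$.
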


\begin{proof}
It is clear that closed sets are K-closed, so we only need to prove the other implication. So let's suppose that $X$ is K-closed and prove that it is closed. 

\medskip\noindent
By Theorem \ref{MainThv2}, we may write 
\[
X=(C_1\cap U_1)\cup\dots\cup(C_m\cap U_m)
\]
where the sets $C_i$ are closed and the sets $U_i$ are open. We may also assume that each $C_i$ is irreducible. 
Then they are indeed K-irreducible by the assumption. So 
\[
X=\overline X^K=\overline{C_1\cap U_1}^K\cup\dots\cup\overline{C_m\cap U_m}^K=C_1\cup\dots\cup C_m.
\]
Thus $X$ is closed.

\end{proof}

\section{Ranks}\label{ranks_section}

\noindent
It is well-known that the theory of pairs of algebraically closed fields is $\omega$-stable with $\Mr(\bk)=1$ and $\Mr(\Omega)=\omega$.  (Here and below, Morley ranks are with respect to the pair $(\Omega,\bk)$.)

\medskip\noindent
We relate the Morley rank and the pair topology. We also relate them to another notion of dimension given by a certain pregeometry called {\it small closure}. The concept of smallness below is defined for any structure and for any subset of the underlying set, but here we always use it for subsets of an algebraically closed field and mostly with subsets definable in a pair of algebraically closed fields.

\begin{definition}
Let $\Cal M=(M,\dots)$ be a first order structure in a language $\Cal L$. We say that a subset $X$ of $M$ is \emph{large} if there is a multi-valued function $f: M^m \stackrel{n}{\longrightarrow} M$ definable in $\Cal M$ such that $f(X^m)=M$. (A {\it multi-valued function} $f:X\stackrel{n}{\longrightarrow} Y$ is a function $f:X\to \mathcal P(Y)$ such that  $f(x)$ has at most $n$ elements for each $x\in X$.)

\medskip\noindent
If $X$ is not large, then we say it is \emph{small}.
\end{definition}

\medskip\noindent
Some remarks are in order. The use of multi-valued functions in this definition is really crucial in the general setting, however it is proven in \cite{vdDG} that in the case of algebraically closed fields usual functions are enough. Note also that the notion of smallness is first order in the language $\Cal L(U)$ extending $\Cal L$ by a (new) unary predicate $U$ which is interpreted as $X$.  For more facts on the notion, the reader could check \cite{vdDG}.

\medskip\noindent
If $X$ is a subfield of an algebraically closed field $\Omega$, then the only way it can be large is that either it is the whole $\Omega$ or it is a real closed subfield such that the degree of $\Omega$ over $X$ is 2. Hence a proper algebraically closed subfield $\bk$ is always small. As a result, the image of $\bk^n$ under a definable multi-valued function is also small. The converse is also correct: a small set is contained in the image of $\bk^n$ under a definable function. Using this, it is also easy to see that being small for a set definable in $(\Omega,\bk)$ is the same as having finite Morley rank. So we freely use each one of these equivalent concepts.

\medskip\noindent
We define \emph{small closure} in a similar way to algebraic closure, replacing ``finite" by ``small". More precisely: Let $A\subseteq \Omega$, then $\alpha\in\Omega$ is in the small closure of $A$, denoted by $\operatorname{scl}(A)$, if it is contained in a small set definable in $(\Omega,\bk)$ over $A$. In a saturated enough extension, this is the same as $\overline{\bk(A)}$. Then this closure operator is a pregeometry and hence gives the notions of independence and dimension in the usual way. 

\begin{definition}
 \begin{enumerate}
  \item Given $\alpha=(\alpha_1,\dots,\alpha_n)\in\Omega^n$ and $A\subseteq \Omega$, we define the \emph{small rank}, $\operatorname{rk}(\alpha/A)$, of $\alpha$ over $A$ to be the pregeometry dimension of $\{\alpha_1,\dots,\alpha_n\}$ over $A$.
  \item The \emph{small dimension}, $\operatorname{sdim}(X)$, of a definable set $X\subseteq\Omega^n$ is defined to be the the maximum of the set
  \[
  \left\{\operatorname{rk}(\alpha/A):\alpha\in X, A\text{ is any set over which $X$ is defined}\right\}.
  \]
 \end{enumerate}
\end{definition}

\medskip\noindent
The small dimension of a definable set is well-defined, because of the explanation on page 315 of \cite{Gagelman}. 

\medskip\noindent
For instance, the small dimension of a small set is $0$ and $\sdim(\Omega^n)=n$. Note that for two definable sets $X,Y$, if $\operatorname{sdim}(X)< \operatorname{sdim}(Y)$, then $\Mr(X)<\Mr(Y)$. However, the converse is not correct: 
\[
\operatorname{sdim}(\bk)=\operatorname{sdim}(\bk+\bk \alpha)=0,
\]
 but if $\alpha\notin\bk$, then $\Mr(\bk+\bk \alpha)=2$.
 
 \medskip\noindent
 Consider the example from the introduction:
 \[
 C=\{\alpha\in\Omega: \delta(\alpha)=\alpha^3-\alpha^2\}.
 \]
 This is a strongly minimal set in the differentially closed field $(\Omega,\delta)$ and its geometry is trivial (see \cite{Marker_MT_diff_fields} for details).  If it were definable in the pair, then it would still have Morley rank at most $1$, hence it would be a small set. Then its small dimension would be $0$. However, this contradicts with the fact that its geometry is trivial. Therefore this is a subset of $\Omega$ definable in the differential field, but not in the pair.

\begin{lemma}
If $C\subsetneq\Omega^n$ is closed then $\operatorname{sdim}(C)<n$.
\end{lemma}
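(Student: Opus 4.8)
The plan is to reduce $C$ to the simplest possible closed sets and then to exploit the fact that an $\scl$-generic point of $\Omega^n$ cannot satisfy a nontrivial Wronskian condition.

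First I would use that the pair topology is noetherian, so $C$ is a finite union of basic closed sets; since $\sdim$ of a union is the maximum of the $\sdim$'s of the pieces, and each piece is contained in the proper set $C$ (hence proper), it suffices to treat a single proper basic closed set $f^{-1}(Y_{k_1}\times\dots\times Y_{k_m})$. Writing $f=(g_1,\dots,g_m)$ with $g_i\colon\Omega^n\to\Omega^{k_i}$, this set is the intersection of the sets $D_i=\{x : g_i(x)\in Y_{k_i}\}$, and properness of the intersection forces some $D_{i_0}$ to be proper. As $\sdim$ is monotone under inclusion, it then suffices to prove $\sdim(D)<n$ for a proper set of the form $D=\{\alpha : g_1(\alpha),\dots,g_k(\alpha)\text{ are }\bk\text{-linearly dependent}\}$, where $g_1,\dots,g_k$ are polynomials.

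Next I would invoke the well-definedness of small dimension to compute $\sdim(D)$ using the particular field of definition $A$ generated by the coefficients of $g_1,\dots,g_k$, so that $\sdim(D)=\max_{\alpha\in D}\rk(\alpha/A)$. Assuming toward a contradiction that this maximum equals $n$, I take $\alpha\in D$ with $\rk(\alpha/A)=n$, i.e. $\alpha_1,\dots,\alpha_n$ algebraically independent over $\bk(A)$. Membership in $D$ yields $b_1,\dots,b_k\in\bk$, not all zero, with $\sum_j b_j g_j(\alpha)=0$. Collecting monomials gives $\sum_\iota\big(\sum_j b_j c_{j\iota}\big)\alpha^\iota=0$, where the $c_{j\iota}\in A$ are the coefficients of the $g_j$. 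Each coefficient $\sum_j b_j c_{j\iota}$ lies in $\bk(A)$, and the distinct monomials $\alpha^\iota$ are linearly independent over $\bk(A)$ exactly because $\alpha$ is algebraically independent over $\bk(A)$. Hence every such coefficient vanishes, which means $\sum_j b_j g_j=0$ identically. Then $g_1(\beta),\dots,g_k(\beta)$ are $\bk$-linearly dependent for every $\beta\in\Omega^n$, so $D=\Omega^n$, contradicting properness; therefore $\sdim(D)<n$.

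The step I expect to require the most care is the handling of parameters: I must ensure that the small dimension of $D$ may legitimately be computed with a field of definition containing the coefficients of the defining polynomials, so that the single relation $\sum_j b_j g_j(\alpha)=0$ genuinely descends to an identity of polynomials. The algebraic heart — that one $\bk$-linear relation among the values $g_j(\alpha)$ at an $\scl$-generic $\alpha$ forces an identical $\bk$-linear relation among the $g_j$ — is then just linear independence of distinct monomials under algebraic independence, and should be routine.
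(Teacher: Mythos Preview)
Your argument is correct and follows the same outline as the paper's: reduce via unions and intersections to a single proper preimage $g^{-1}(Y_k)$, and then show that a point of maximal small rank cannot lie in it because the resulting $\bk$-linear relation among $g_1(\alpha),\dots,g_k(\alpha)$ would collapse to an identity of polynomials. Your write-up in fact supplies the details the paper leaves implicit in the phrase ``we again get a similar dependence over $\bk$''; the only superfluous step is the appeal to noetherianity, since by definition every closed set is already a finite union of basic closed sets.
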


\begin{proof}
First let $C=Y_n$ and let $\alpha=(\alpha_1,\dots,\alpha_n)\in Y_n$. Then --without loss of generality-- we have that $\alpha_1\in \bk \alpha_2+\dots+\bk \alpha_n$. So $\operatorname{rk}(\alpha/\emptyset)$ is at most $n-1$. Therefore $\operatorname{sdim}(C)\leq n-1$.

\medskip\noindent
Now if $X$ is the pre-image of $Y_k$ under a non-zero polynomial map, then we again get a similar dependence over $\bk$. 

\medskip\noindent
It is clear that the small dimension of finite intersection is at most the least of small dimensions of sets we intersect. Similarly the small dimension of a finite union is the maximal of the small dimensions of the sets we put together. 
\end{proof}

\noindent
We collect some consequences of this lemma.

\begin{corollary}
Proper closed subsets of $\Omega$ are small.
\end{corollary}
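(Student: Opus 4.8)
The plan is to read off the corollary directly from the preceding lemma specialized to $n=1$, combined with the characterization of smallness recorded in the remarks just before the lemma. A proper closed subset $C\subsetneq\Omega$ is in particular definable in the pair (closed sets are definable), so the lemma applies and yields $\sdim(C)<1$, that is, $\sdim(C)=0$.

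First I would invoke the comparison between small dimension and Morley rank stated at the start of this section: for definable sets $X,Y$, if $\sdim(X)<\sdim(Y)$ then $\Mr(X)<\Mr(Y)$. Applying this with $X=C$ and $Y=\Omega$, and using $\sdim(\Omega)=1$ together with $\Mr(\Omega)=\omega$, I obtain $\Mr(C)<\omega$, so $C$ has finite Morley rank. Then I would appeal to the equivalence, recorded in the remarks following the definition of smallness, that a subset of $\Omega$ definable in the pair is small if and only if it has finite Morley rank. This immediately gives that $C$ is small.

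An alternative, more self-contained route avoids the Morley-rank detour. Fixing a finite set $A$ of parameters defining $C$, the equality $\sdim(C)=0$ forces $\rk(\alpha/A)=0$ for every $\alpha\in C$, i.e.\ $C\subseteq\scl(A)$. Since $\scl(A)=\overline{\bk(A)}$ is the image of $\bk^m$ under a definable function for a suitable $m$, it is small, and $C$, being a subset of a small set, is small as well.

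There is no genuine obstacle here, as the statement is a routine specialization of the lemma; the only points requiring care are to confirm that a proper closed subset of $\Omega$ is genuinely definable in the pair, so that $\sdim$ is defined for it, and to ensure that $\sdim(C)$ is computed with respect to a fixed finite defining set, which is legitimate by the well-definedness of small dimension cited from \cite{Gagelman}.
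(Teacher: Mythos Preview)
Your proposal is correct and follows the paper's intended reasoning: the corollary is stated without proof, as the immediate specialization of the preceding lemma to $n=1$, together with the observation that $\sdim(C)=0$ forces $C$ to be small. Your second, more direct route (via $C\subseteq\scl(A)$) is exactly this implicit argument; the Morley-rank detour is also valid but unnecessary.
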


\begin{corollary}
The Morley rank of a proper closed subset of $\Omega^n$ is less than $\omega\, n=\Mr(\Omega^n)$.
\end{corollary}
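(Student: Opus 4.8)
The plan is to deduce this immediately from the preceding Lemma together with the monotonicity relating small dimension and Morley rank that was recorded just after the definition of $\sdim$. Concretely, let $C\subsetneq\Omega^n$ be closed. First I would invoke the Lemma to obtain $\sdim(C)<n$. Since $\sdim(\Omega^n)=n$, this reads $\sdim(C)<\sdim(\Omega^n)$, and both $C$ and $\Omega^n$ are definable (every closed set is constructible, hence definable). Applying the stated implication ``$\sdim(X)<\sdim(Y)\Rightarrow\Mr(X)<\Mr(Y)$'' with $X=C$ and $Y=\Omega^n$ then gives $\Mr(C)<\Mr(\Omega^n)$, and substituting $\Mr(\Omega^n)=\omega\, n$ yields the claim.

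The point worth isolating is that the inequality we need is strict, and strictness is exactly what the monotonicity provides: a drop in small dimension forces a drop in Morley rank. At the level of a single type this reflects the finer estimate $\omega\cdot\sdim(X)\le\Mr(X)<\omega\cdot(\sdim(X)+1)$, in which the small rank governs the powers of $\omega$ appearing in the Cantor normal form of the Morley rank while the remaining finite part comes from the small (finite Morley rank) directions. From $\sdim(C)\le n-1$ this estimate already gives $\Mr(C)<\omega\cdot n$ directly, without separately invoking the value $\Mr(\Omega^n)=\omega n$; either route closes the argument.

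There is really no serious obstacle at this stage: all the content has been front-loaded into the Lemma, which is where the geometric input actually lives (a proper closed set forces a nontrivial $\bk$-linear dependence among the relevant monomials, cutting the pregeometry dimension). The only things to check here are bookkeeping: that $C$ is a bona fide definable set so that $\sdim$ and $\Mr$ are defined for it, and that the passage from the strict $\sdim$-inequality to the strict $\Mr$-inequality is legitimate. Both are handled by the results quoted above, so the corollary follows in a line.
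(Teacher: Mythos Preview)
Your proof is correct and is exactly the argument the paper intends: apply the preceding Lemma to get $\sdim(C)<n=\sdim(\Omega^n)$, then use the implication $\sdim(X)<\sdim(Y)\Rightarrow\Mr(X)<\Mr(Y)$ recorded just after the definition of small dimension. The paper leaves this corollary unproved because it is immediate from those two ingredients, and your write-up spells this out accurately.
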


\begin{corollary}
The small dimension of a non-empty open subset of $\Omega^n$ is $n$ and its Morley rank is $\omega\, n$.
\end{corollary}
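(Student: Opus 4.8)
The plan is to deduce both statements directly from the two immediately preceding results by expressing $\Omega^n$ as the union of the given open set and its complement. Let $V\subseteq\Omega^n$ be non-empty and open, and set $C=\Omega^n\setminus V$. Since $V$ is open and non-empty, $C$ is a \emph{proper} closed subset of $\Omega^n$. Moreover both sets are definable---closed sets are finite unions of basic closed sets, hence definable, and $V$ is the complement of such---so that $\operatorname{sdim}$ and $\Mr$ are defined for each of them.

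First I would handle the small dimension. Using the additivity of small dimension over finite unions recorded in the proof of the preceding lemma, together with the computation $\operatorname{sdim}(\Omega^n)=n$, we obtain
\[
n=\operatorname{sdim}(\Omega^n)=\max\bigl(\operatorname{sdim}(V),\operatorname{sdim}(C)\bigr).
\]
By that same lemma $\operatorname{sdim}(C)\le n-1<n$, so the maximum is forced to be attained by $V$, giving $\operatorname{sdim}(V)=n$.

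For the Morley rank I would argue identically, replacing the union formula for small dimension by the standard fact that the Morley rank of a finite union equals the maximum of the Morley ranks of its pieces. This yields
\[
\omega\,n=\Mr(\Omega^n)=\max\bigl(\Mr(V),\Mr(C)\bigr),
\]
and since the previous corollary gives $\Mr(C)<\omega\,n$, we conclude $\Mr(V)=\omega\,n$.

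The argument is essentially immediate, with the real content supplied by the preceding lemma and corollary. The only points needing care are confirming that $V$ and $C$ are genuinely definable, so that both rank notions apply, and that the complement $C$ is a \emph{proper} closed set---which is precisely where the non-emptiness of $V$ enters. These are the mild obstacles rather than any substantive difficulty.
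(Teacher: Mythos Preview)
Your proof is correct and follows essentially the same approach as the paper: decompose $\Omega^n$ as the union of the open set and its proper closed complement, and use the preceding lemma and corollary to force the maximum of the two ranks onto the open piece. The paper's version is slightly terser but the idea is identical.
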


\begin{proof}
Let $U$ be an open subset of $\Omega^n$ and let $C$ be its complement in $\Omega^n$. Then one of them has to have small dimension equal to $n$. Since $\operatorname{sdim}(C)< n$, we get that $\operatorname{sdim}(U)=n$. 

\medskip\noindent
Similarly, one of $C$ or $U$ should have the same Morley rank as $\Omega^n$ and the Morley rank of $C$ is strictly less than that. 
\end{proof}

\medskip\noindent
Next we prove a partial inverse of the last corollary.

\begin{proposition}
Let $X\subseteq \Omega^n$ be a definable set that has the same Morley rank as $\Omega^n$. Then $X$ has non-empty interior. 
\end{proposition}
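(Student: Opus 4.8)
The plan is to use the structure theorem for definable sets together with the rank drop for proper closed sets, and then to identify which piece of $X$ carries the full Morley rank. Recall that $\Mr(\Omega^n)=\omega\,n$.

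First I would apply Theorem \ref{MainTh} to write $X$ as a boolean combination of closed sets. Putting this boolean combination into disjunctive normal form, and using that a finite intersection of complements of closed sets is open, I would obtain an expression
\[
X=\bigcup_{i=1}^m (C_i\cap U_i),
\]
where each $C_i$ is closed and each $U_i$ is open in the pair topology.

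Next, I would invoke the fact that the Morley rank of a finite union is the maximum of the Morley ranks of the members. Since $\Mr(X)=\omega\,n$, there is some index $i$ with $\Mr(C_i\cap U_i)=\omega\,n$. Because $C_i\cap U_i\subseteq C_i\subseteq\Omega^n$ and Morley rank is monotone, this forces $\Mr(C_i)=\omega\,n$. But by the corollary stating that a proper closed subset of $\Omega^n$ has Morley rank strictly less than $\omega\,n$, the set $C_i$ cannot be proper; hence $C_i=\Omega^n$. Consequently $C_i\cap U_i=U_i\subseteq X$, and $U_i$ is open and nonempty (nonempty precisely because its Morley rank is $\omega\,n\neq -\infty$). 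Thus $X$ contains a nonempty open set, i.e.\ it has nonempty interior.

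The argument is essentially immediate once the two main ingredients are in place: the boolean-combination description of definable sets and the rank drop for proper closed subsets. There is no serious obstacle; the only points needing a little care are the additivity (maximum) of Morley rank over finite unions, which is standard, and the observation that the open part $U_i$ of the full-rank term is genuinely nonempty, which is forced by its rank being maximal.
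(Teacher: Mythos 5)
Your argument is correct and follows essentially the same route as the paper: decompose $X$ as $\bigcup_i (C_i\cap U_i)$ and use the corollary that proper closed subsets of $\Omega^n$ have Morley rank below $\omega\,n$ to force some $C_i=\Omega^n$, whence $U_i\subseteq X$. The only cosmetic difference is that you locate the full-rank piece directly via monotonicity and the union-maximum property, whereas the paper first observes $\overline X=\Omega^n$ and then applies the covering $\overline X\subseteq C_1\cup\dots\cup C_m$; both are sound.
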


\begin{proof}
First note that $\overline {X}=\Omega^n$. Otherwise, by the corollary above, $\Omega^n$ would have disjoint two subsets of maximal Morley rank.

\medskip\noindent
Write 
\[
X=(C_1\cap U_1)\cup\dots\cup(C_m\cap U_m),
\]
where each $C_i$ is closed and each $U_i$ is open. Then
\[
\Omega^n=\overline X\subseteq C_1\cup\dots\cup C_m.
\]
Therefore one of the $C_i$'s must be $\Omega^n$ and this means that the corresponding $U_i$ is contained in $X$.
\end{proof}

\noindent
The following has a similar proof.

\begin{proposition}\label{nonempty_int_small}
Let $X\subseteq \Omega^n$ be a definable set that has the same small dimension as $\Omega^n$. Then $X$ has non-empty interior. 
\end{proposition}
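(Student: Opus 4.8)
The plan is to imitate, almost verbatim, the proof of the immediately preceding proposition (the Morley-rank version), replacing Morley rank by small dimension throughout and invoking the small-dimension analogues of the supporting corollaries. The statement asserts that a definable $X\subseteq\Omega^n$ with $\sdim(X)=n=\sdim(\Omega^n)$ has non-empty interior, and the strategy is the same two-move argument: first show the closure of $X$ is everything, then use the Boolean decomposition of $X$ into locally closed pieces to extract an open set sitting inside $X$.

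First I would argue that $\overline X=\Omega^n$. Suppose not; then the complement of $\overline X$ is a non-empty open set, so by the corollary on open sets it has small dimension $n$. But $\overline X$ is a proper closed subset of $\Omega^n$, whence $\sdim(\overline X)<n$ by the Lemma; since $X\subseteq\overline X$ we would get $\sdim(X)<n$, contradicting the hypothesis. Hence $\overline X=\Omega^n$. Here I am using that small dimension is monotone under inclusion, which is immediate from its definition as a supremum of small ranks over points and parameter sets, and that the small dimension of a proper closed set drops below $n$ exactly as in the previous argument.

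Next, by Theorem \ref{MainThv2} (and Lemma \ref{red_parameter_free} to handle parameters) I would write
\[
X=(C_1\cap U_1)\cup\dots\cup(C_m\cap U_m),
\]
with each $C_i$ closed and each $U_i$ open. Taking closures and using that a finite union of closed sets is closed gives
\[
\Omega^n=\overline X\subseteq \overline{C_1\cap U_1}\cup\dots\cup\overline{C_m\cap U_m}\subseteq C_1\cup\dots\cup C_m.
\]
Since $\Omega^n$ is irreducible in the pair topology (it refines the Zariski topology, under which $\Omega^n$ is irreducible, and a finer topology has fewer, hence still irreducible, decompositions of the whole space), one of the $C_i$ must equal $\Omega^n$. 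For that index $i$ we then have $U_i=C_i\cap U_i\subseteq X$, so $X$ contains the non-empty open set $U_i$ and therefore has non-empty interior.

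The one point that needs a word of care, and the place I expect the only friction, is the irreducibility step that forces some $C_i=\Omega^n$: in the rank-$\omega$ proof this is dressed up as ``one of the $C_i$ must have maximal Morley rank,'' but the cleanest justification is simply that $\Omega^n$ is irreducible in the pair topology, so it cannot be a finite union of proper closed subsets. If one prefers to stay parallel to the previous proof and avoid invoking irreducibility directly, the alternative is to note that $\sdim$ of a finite union is the maximum of the pieces (by the Lemma's last paragraph), so $\sdim(C_1\cup\dots\cup C_m)=n$ forces some $\sdim(C_i)=n$, and then $C_i=\Omega^n$ because any proper closed set has small dimension strictly less than $n$. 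Either route closes the argument with no genuine obstacle.
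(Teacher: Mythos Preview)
Your proof is correct and follows exactly the approach the paper intends (the paper merely says the proof is ``similar'' to the preceding Morley-rank version). One correction, though: the parenthetical claim that irreducibility passes to finer topologies is false---a finer topology has \emph{more} closed sets, hence more candidate decompositions, and irreducibility can be lost (e.g.\ $\R$ is irreducible in the cofinite topology but not in the Euclidean one). Your alternative justification is the right one: if every $C_i$ were proper then $\sdim(C_i)<n$ for each $i$, so $\sdim(C_1\cup\dots\cup C_m)<n$, contradicting $\sdim(\Omega^n)=n$. This argument is exactly what establishes that $\Omega^n$ is pair-irreducible, so either phrasing is fine once grounded this way.
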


\begin{lemma}
Let $X\subseteq \Omega^n$. Suppose that there is a projection $\pi:\Omega^n\to \Omega^k$ such that $\pi(X)$ has non-empty interior. Then $k\leq\operatorname{sdim}(X)$.
\end{lemma}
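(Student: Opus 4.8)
The plan is to produce, over a single fixed parameter set defining $X$, a point of $X$ whose small rank is at least $k$; by the definition of small dimension this at once gives $\sdim(X)\ge k$. First I would fix a set $A$ over which $X$ is defined. Since $\pi$ is a coordinate projection, hence a polynomial map, $\pi(X)$ is $A$-definable, and by Corollary \ref{closure_parameters} so are its interior $U:=\operatorname{int}(\pi(X))$ and the complement $C:=\Omega^k\setminus U$. By hypothesis $U$ is nonempty, so $C$ is a \emph{proper} closed subset of $\Omega^k$; the lemma that a proper closed subset of $\Omega^k$ has small dimension $<k$ then yields $\sdim(C)<k$.

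Next I would pass to a sufficiently saturated elementary extension (where $\sdim$ takes the same value) and, using the identification $\scl(A)=\overline{\bk(A)}$, choose $\beta=(\beta_1,\dots,\beta_k)$ whose coordinates are algebraically independent over $\bk(A)$, i.e.\ $\scl$-independent over $A$; thus $\rk(\beta/A)=k$. Since $C$ is $A$-definable and $\rk(\beta/A)=k>\sdim(C)$, the point $\beta$ cannot lie in $C$, so $\beta\in U\subseteq\pi(X)$. I would then lift $\beta$ to some $\alpha\in X$ with $\pi(\alpha)=\beta$. As $\pi$ merely selects $k$ of the coordinates, the entries of $\beta$ form a subtuple of those of $\alpha$, so by monotonicity of pregeometry dimension $\rk(\alpha/A)\ge\rk(\beta/A)=k$. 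Because $\alpha\in X$ and $A$ defines $X$, this gives $\sdim(X)\ge\rk(\alpha/A)\ge k$, as desired.

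The main subtlety, and the step I would be most careful about, is the bookkeeping of parameters. One is tempted to argue abstractly that $\sdim(\pi(X))=k$ (from the corollary that a nonempty open subset of $\Omega^k$ has small dimension $k$) and then invoke a projection inequality $\sdim(\pi(X))\le\sdim(X)$; but since enlarging the parameter set can only decrease small rank, the value $k$ need not be witnessed over the particular $A$ one has in hand, and the naive inequality is awkward to make precise. Constructing $\beta$ generic over the very set $A$ that defines everything circumvents this: the rank $k$ is then genuinely realized over $A$ and is preserved under lifting into $X$. The remaining ingredients---that the interior is $A$-definable, and the translation of small rank into transcendence degree over $\bk(A)$ used both to build $\beta$ and to read off the subtuple inequality---are routine given the results already established.
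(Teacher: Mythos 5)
Your proof is correct and follows essentially the same route as the paper: take a point of $\pi(X)$ that is $\scl$-generic over the defining parameters $A$, lift it to $X$, and use monotonicity of the pregeometry rank on subtuples. The only difference is that you explicitly justify the existence of such a generic point in $\pi(X)$ (via the $A$-definability of the interior and the bound $\sdim<k$ for proper closed subsets of $\Omega^k$), a step the paper's proof passes over by simply asserting $\sdim(\pi(X))=k$ and taking a generic element.
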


\begin{proof}
Let $X$ be definable over $A$.  Without loss of generality $\pi$ is the projection onto the first $k$ coordinates. By the proposition above $\sdim(\pi(X))=k$. Take a generic $a=(a_1,\dots,a_k)\in \pi(X)$; this is to say that $a_1,\dots,a_k$ are $\scl$-independent over $A$. Now take $a_{k+1},\dots,a_n$ such that $(a_1,\dots,a_n)\in X$. It is clear that $\rk(a_1,\dots,a_n)\geq k$. Therefore $\sdim(X)\geq k$.
\end{proof}

\begin{proposition}
Let $X\subseteq \Omega^n$. Then $\operatorname{sdim}(X)$ is the maximal $k$ such that there is a projection $\pi:\Omega^n\to \Omega^k$  in a way that $\pi(X)$ has non-empty interior. 
\end{proposition}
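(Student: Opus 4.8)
The plan is to establish the two inequalities separately, one of which is precisely the preceding lemma. Write $d=\sdim(X)$ and let $K$ denote the maximal $k$ for which some coordinate projection $\pi\colon\Omega^n\to\Omega^k$ has $\pi(X)$ with non-empty interior. The lemma immediately above shows that any such $k$ satisfies $k\leq d$; taking the maximum gives $K\leq d$. So the whole task reduces to producing a single projection onto $d$ coordinates whose image has non-empty interior, which would yield $K\geq d$.

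For this reverse inequality I would first fix a set $A$ over which $X$ is defined and, using the well-definedness of small dimension, choose $\alpha=(\alpha_1,\dots,\alpha_n)\in X$ with $\rk(\alpha/A)=d$. Since the small rank is the pregeometry dimension of $\{\alpha_1,\dots,\alpha_n\}$ over $A$, some subset of the coordinates of size $d$ is $\scl$-independent over $A$; after reordering, say it is $\alpha_1,\dots,\alpha_d$. Let $\pi$ be the projection onto these first $d$ coordinates.

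Next I would show that $\sdim(\pi(X))=d$. On one hand $\pi(X)\subseteq\Omega^d$ is definable over $A$, so $\sdim(\pi(X))\leq d$; on the other hand $\pi(\alpha)=(\alpha_1,\dots,\alpha_d)\in\pi(X)$ satisfies $\rk(\pi(\alpha)/A)=d$, so $\sdim(\pi(X))\geq d$. Thus $\pi(X)$ has small dimension equal to that of its ambient space $\Omega^d$, and Proposition \ref{nonempty_int_small} applies directly to give that $\pi(X)$ has non-empty interior. Hence $K\geq d$, and combining with the first paragraph, $K=d=\sdim(X)$, as claimed.

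I expect the only point requiring genuine care is the well-definedness step at the start of the second paragraph: one must know that $\sdim(X)$ can be realized as $\max_{\alpha\in X}\rk(\alpha/A)$ for a \emph{single} fixed defining set $A$, so that the $\scl$-independent coordinates $\alpha_1,\dots,\alpha_d$ are independent over the same $A$ that is then used when invoking Proposition \ref{nonempty_int_small}. This is exactly the content cited from \cite{Gagelman} immediately after the definition of $\sdim$. Everything else is a transparent combination of the lemma and proposition that precede the statement, so no further estimates are needed.
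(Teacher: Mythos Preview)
Your proof is correct and follows essentially the same route as the paper: use the preceding lemma for $K\le d$, then pick a generic $\alpha\in X$ over a defining set $A$, reorder so that $\alpha_1,\dots,\alpha_d$ are $\scl$-independent, and conclude via Proposition~\ref{nonempty_int_small} that the projection onto those coordinates has non-empty interior. Your write-up is simply more explicit than the paper's terse version, and your remark about needing the Gagelman well-definedness fact to realize $\sdim(X)$ over a single fixed $A$ is a legitimate point that the paper leaves implicit.
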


\begin{proof}
Let $X$ be definable over $A$ and $l=\sdim(X)$. Take $a\in X$ such that $\rk(a/A)=l$; say $a_1,\dots,a_l$ are $\scl$-independent over $A$. Then $\pi(X)$ has small dimension $l$, where $\pi$ is the projection onto the first $l$ coordinates.
\end{proof}

\medskip\noindent
\begin{lemma}\label{intersect_open}
Let $X,U$ be definable sets where $U$ is an open set intersecting $X$. Suppose also that $\overline X$ is irreducible. Then $\overline X=\overline{X\cap U}$.
\end{lemma}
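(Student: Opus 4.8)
{\bf Proof plan.}
The statement to prove is Lemma \ref{intersect_open}: if $U$ is open and meets $X$, and $\overline X$ is irreducible, then $\overline X=\overline{X\cap U}$. The plan is to argue by the standard irreducibility dichotomy in a noetherian topological space, which is available here since the pair topology is noetherian. First I would observe that $X\cap U\subseteq X\subseteq\overline X$, so taking closures gives $\overline{X\cap U}\subseteq\overline X$; the whole content is the reverse inclusion.

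\medskip\noindent
For the reverse inclusion, I would use the decomposition
\[
\overline X=\overline{X\cap U}\cup\overline{X\setminus U}.
\]
This holds because $X=(X\cap U)\cup(X\setminus U)$, and closure commutes with finite unions. Now $X\setminus U\subseteq \Omega^n\setminus U=\overline{\Omega^n\setminus U}$, since the complement of an open set is closed; hence $\overline{X\setminus U}$ is contained in the closed set $\Omega^n\setminus U$, so $\overline{X\setminus U}\subsetneq\overline X$ whenever $U$ actually meets $X$ (a point of $X\cap U$ lies in $\overline X$ but not in $\Omega^n\setminus U$, so $\overline{X\setminus U}$ cannot be all of $\overline X$). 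Thus we have written the irreducible closed set $\overline X$ as a union of two closed subsets, $\overline{X\cap U}$ and $\overline{X\setminus U}$, the second of which is proper.

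\medskip\noindent
The key step is then irreducibility: since $\overline X$ is irreducible and $\overline X=\overline{X\cap U}\cup\overline{X\setminus U}$ with both pieces closed, one of the two must equal $\overline X$. Because $\overline{X\setminus U}$ is a proper subset (it misses the nonempty set $X\cap U$), it must be that $\overline{X\cap U}=\overline X$, which is exactly what we want. I expect the only point requiring care to be the verification that $\overline{X\cap U}$ and $\overline{X\setminus U}$ are genuinely closed subsets of $\overline X$ and that their union is all of $\overline X$ — this is immediate from the finite-union property of closure and from $X\subseteq\overline{X\cap U}\cup\overline{X\setminus U}$ — together with checking that $X\cap U\neq\emptyset$ really does prevent $\overline{X\setminus U}$ from filling up $\overline X$. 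No calculation is needed beyond these set-theoretic manipulations; the substance is entirely the definition of topological irreducibility applied to this two-term decomposition.
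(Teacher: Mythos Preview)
Your argument is correct and is in fact cleaner than the paper's. You use only the decomposition $\overline X=\overline{X\cap U}\cup\overline{X\setminus U}$, the inclusion $\overline{X\setminus U}\subseteq\Omega^n\setminus U$, and the definition of irreducibility. This works in any topological space and never uses that $X$ is definable or constructible.

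The paper instead invokes Theorem~\ref{MainThv2} to write $X=(C_1\cap U_1)\cup\dots\cup(C_m\cap U_m)$ with each $C_i$ closed irreducible and each $U_i$ open, identifies $\overline X$ with one of the $C_i$ via irreducibility, and then argues (by contradiction) that $C_1\cap U_1\cap U$ must be nonempty. This route is longer, depends on the earlier constructibility theorem, and as written contains a slip (the line ``Suppose that $C_1\cap U_1\cap U\neq\emptyset$'' should read ``$=\emptyset$''). Your approach avoids all of this and would be a strict improvement to the exposition.
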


\begin{proof}
Let $X=(C_1\cap U_1)\cup\dots\cup(C_m\cap U_m)$, where the sets $C_i$ are closed and irreducible and $U_i$ are open. Then 
\[\overline X=C_1\cup\dots\cup C_m.\]
 So $X=C_i$ for some $i$. Without loss of generality, $i=1$. 

\medskip\noindent
Note that the only way the conclusion of the lemma does not hold  is when $C_1\cap U_1\cap U= \emptyset$. Suppose that $C_1\cap U_1\cap U\neq \emptyset$.  Then 
\[
C_1\subseteq \Omega^n\setminus (U_1\cap U)=(\Omega^n\setminus U_1)\cup(\Omega^n\setminus U),
\]
and 
\[
C_1=(C_1\setminus U_1)\cup(C_1\setminus U).
\]
Then either $C_1\subseteq\Omega^n\setminus U_1$ or $C_1\subseteq\Omega^n\setminus U$. The former one is not possible, so we have $C_1\cap U=\emptyset$. But then $C_i=C_i\cap C_1\subseteq \Omega^n\setminus U$ and $C_i\cap U=\emptyset$, which contradicts the assumption that $X$ intersects $U$. Thus we have  $\overline{X\cap U}=\overline X$

\end{proof}

\noindent
Now we generalize Proposition \ref{nonempty_int_small}

\begin{proposition}
Let $X\subseteq \Omega^n$ be definable in the pair. Then $\operatorname{sdim}(X)=\operatorname{sdim}(\overline X)$.
\end{proposition}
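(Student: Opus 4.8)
The plan is to prove both inequalities separately. The inequality $\sdim(X)\le\sdim(\overline X)$ is immediate, since $X\subseteq\overline X$ and small dimension is monotone under inclusion (a larger definable set admits at least as many $\scl$-independent coordinates among its points). So the real content is the reverse inequality $\sdim(\overline X)\le\sdim(X)$, and for this I would use the characterization of small dimension established just before the statement: namely, that $\sdim(Y)$ equals the maximal $k$ for which there is a coordinate projection $\pi:\Omega^n\to\Omega^k$ with $\pi(Y)$ having non-empty interior.

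Set $l=\sdim(\overline X)$ and fix a projection $\pi:\Omega^n\to\Omega^l$ such that $\pi(\overline X)$ has non-empty interior; this exists by the projection characterization applied to $\overline X$. The goal is then to transfer this to $X$ itself, i.e.\ to show $\pi(X)$ also has non-empty interior, whence $\sdim(X)\ge l$ and we are done. The key structural tool is Lemma \ref{intersect_open}: writing $X=(C_1\cap U_1)\cup\dots\cup(C_m\cap U_m)$ with each $C_i$ closed and irreducible and each $U_i$ open (using Theorem \ref{MainThv2}), we have $\overline X=C_1\cup\dots\cup C_m$. Hence $\sdim(\overline X)=\max_i\sdim(C_i)$, so some $C_i$ — say $C_1$ — has $\sdim(C_1)=l$, and by irreducibility of $C_1$ we may arrange that $C_1\cap U_1$ is the piece realizing this dimension, with $\overline{C_1\cap U_1}=C_1$ by Lemma \ref{intersect_open} (applied with the relevant open set).

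The crucial step is then to argue that a coordinate projection making $\pi(C_1)$ have non-empty interior also makes $\pi(C_1\cap U_1)$ have non-empty interior, so that the dimension-$l$ witness survives intersecting with the open set $U_1$. The cleanest route is to work at the level of closures: since $\overline{C_1\cap U_1}=C_1$ and projections are continuous with the property (recorded in the excerpt) that the product topology is coarser than the ambient pair topology, one shows $\overline{\pi(C_1\cap U_1)}\supseteq\pi(\overline{C_1\cap U_1})$ is dense enough in $\pi(C_1)$ that it cannot omit the non-empty open subset of $\pi(C_1)$; then since $X\supseteq C_1\cap U_1$ we get $\pi(X)\supseteq\pi(C_1\cap U_1)$ with non-empty interior, giving $\sdim(X)\ge l$.

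The main obstacle I anticipate is exactly this last transfer: making precise that intersecting the top-dimensional irreducible component $C_1$ with its open set $U_1$ does not drop the small dimension after projecting. The subtlety is that non-empty interior is not automatically preserved under images, and one must lean on irreducibility of $C_1$ together with Lemma \ref{intersect_open} to ensure $C_1\cap U_1$ is dense in $C_1$ in the pair topology, and then deduce that its projection remains dense in the full-dimensional set $\pi(C_1)$. If a direct density argument proves awkward, the fallback is to argue at the rank level: pick $a\in C_1$ generic with $\rk(a/A)=l$, observe that a generic point of an irreducible closed set lies in the dense open piece $C_1\cap U_1\subseteq X$, and conclude $\rk(a/A)\le\sdim(X)$ directly, which yields $l\le\sdim(X)$ without passing through projections at all.
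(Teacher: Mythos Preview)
Your overall shape is right---use the projection characterization and transfer non-empty interior from $\pi(\overline X)$ to $\pi(X)$---but the proof has a genuine gap at exactly the point you flagged as the obstacle, and neither your main line nor your fallback closes it.

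The missing ingredient is the \emph{full-dimension case}: if $Y\subseteq\Omega^l$ is definable and $\overline Y$ has non-empty interior in $\Omega^l$, then $Y$ itself has non-empty interior. This is what the paper proves first, via the elementary observation that $\overline Y\setminus Y$ always has empty interior (any open $U\subseteq\overline Y\setminus Y$ would force $Y\subseteq\Omega^l\setminus U$ closed, hence $\overline Y\cap U=\emptyset$), so by Proposition~\ref{nonempty_int_small} we get $\sdim(\overline Y\setminus Y)<l$ and thus $\sdim(Y)=l$. Your density argument, once made precise, says $\overline{\pi(C_1\cap U_1)}\supseteq\pi(C_1)$ has non-empty interior; to conclude that $\pi(C_1\cap U_1)$ itself does, you need precisely this full-dimension case, which you never isolate or prove. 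Your fallback has the same problem: asserting that a small-rank-generic point of the irreducible $C_1$ must lie in the dense open piece $C_1\cap U_1$ amounts to claiming $\sdim(C_1\setminus U_1)<\sdim(C_1)$ for a proper closed subset of an irreducible closed set---but this is exactly the Corollary the paper derives \emph{from} the Proposition, so invoking it here is circular.

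Once you have the full-dimension case, the paper's reduction is also simpler than yours: no decomposition into irreducible pieces is needed. With $l=\sdim(\overline X)$ and $\pi$ as you chose, continuity gives $\pi(\overline X)\subseteq\overline{\pi(X)}$, so $\overline{\pi(X)}$ has non-empty interior in $\Omega^l$; the full-dimension case then gives $\sdim(\pi(X))=l$, hence $\sdim(X)\ge l$. Your route through Lemma~\ref{intersect_open} and irreducible components is an unnecessary detour.
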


\begin{proof}
First assume that $\sdim(\overline X)=n$. Since $\overline X=X\cup (\overline X\setminus X)$, either $X$ or $\overline X\setminus X$ has small dimension $n$, but $\overline X\setminus X$ has empty interior. So $\sdim(X)=n$.

\medskip\noindent
Let $\sdim(\overline X)=k$ and take $\pi:\Omega^n\to\Omega^k$ such that $\pi(\overline X)$ has non-empty interior. Let $Y=\pi(X)$. If $\sdim(X)<k$, then $\sdim Y<k$. But $\sdim(\overline Y)=\sdim(Y)$. However $\pi(\overline X)\subseteq \overline Y$, which is a contradiction.
\end{proof}

\medskip\noindent
Using Lemma \ref{intersect_open}, we have the following.

\begin{corollary}
Let $X,U\subseteq\Omega^n$ be definable and suppose that $U$ is open with $X\cap U\neq \emptyset$ and that $\overline X$ is irreducible. Then $\sdim(X)=\sdim(X\cap U)$.
\end{corollary}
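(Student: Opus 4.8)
The plan is to read off this corollary directly from the two results that immediately precede it, so the work is essentially bookkeeping. First I would observe that $X\cap U$ is definable, being the intersection of the two definable sets $X$ and $U$; this is what allows me to invoke the preceding proposition (the one asserting $\sdim(Z)=\sdim(\overline Z)$ for any definable $Z$) with $Z=X\cap U$ as well as with $Z=X$.

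Next, I would check that the hypotheses of Lemma~\ref{intersect_open} are exactly met: $X$ and $U$ are definable, $U$ is open, $U$ meets $X$ (given as $X\cap U\neq\emptyset$), and $\overline X$ is irreducible. Hence that lemma yields the closure identity $\overline X=\overline{X\cap U}$.

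The conclusion then follows by chaining these facts:
\[
\sdim(X)=\sdim(\overline X)=\sdim(\overline{X\cap U})=\sdim(X\cap U),
\]
where the two outer equalities are instances of the small-dimension/closure proposition and the middle equality is the identity supplied by Lemma~\ref{intersect_open}.

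I do not expect any genuine obstacle here, since the hypotheses on $X$ and $U$ are precisely tailored to feed into Lemma~\ref{intersect_open}. The only point worth flagging is that the irreducibility of $\overline X$ enters solely through that lemma (it is not needed for the equality $\sdim=\sdim\circ\,\overline{(\cdot)}$), so the proof makes clear that irreducibility is exactly what guarantees that passing to the open piece $X\cap U$ does not shrink the closure, and therefore does not drop the small dimension.
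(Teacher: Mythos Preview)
Your argument is correct and is exactly the intended one: the paper simply records this corollary as an immediate consequence of Lemma~\ref{intersect_open} together with the preceding proposition that $\sdim(Z)=\sdim(\overline{Z})$, which is precisely the chain of equalities you wrote down.
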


\medskip\noindent
We are proceeding to prove the proposition above for Morley rank in the place of small dimension. We need the following fact from ???.

\begin{fact}
Let $a\in\Omega^n$ and $A\subseteq \Omega$ (of cardinality less than $\kappa$). Then 
\[
\Mr(a/A)=\omega\cdot \rk(a/A)+ \trdeg (\bk(a,A)/\bk(A)).
\]
\end{fact}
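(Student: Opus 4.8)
The plan is to read off $\Mr(a/A)$ from the Lascar (in)equalities for Morley rank in the $\omega$-stable theory of the pair, after splitting $a$ into a block that is transcendental over $\bk$ and a block trapped inside $\bk$. Working in the $\kappa$-saturated pair we have $\scl(A)=\overline{\bk(A)}$. First I would choose coordinates $g=(g_1,\dots,g_d)$ of $a$ forming a maximal subtuple algebraically independent over $\bk(A)$; by definition $d=\rk(a/A)$, and every remaining coordinate of $a$ then lies in $\scl(Ag)=\overline{\bk(Ag)}$. The algebraicity of those coordinates over $\bk(Ag)$ involves only finitely much data from $\bk$, so by stable embeddedness (Corollary~\ref{subsets_of_k}) I would gather this data into a single tuple $w$ from $\bk$ with $a$ and $(g,w)$ interalgebraic over $A$; hence $\Mr(a/A)=\Mr(gw/A)$, and the blocks $g$ and $w$ will produce the two summands.

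I would then compute the blocks separately. Since $w\subseteq\bk\subseteq\bk(A)$ we have $\bk(Aw)=\bk(A)$, so each $g_i$ stays transcendental over $\bk(A,g_{>i},w)$; its type there is the generic type of $\Omega$, of rank $\Mr(\Omega)=\omega$. Iterating the Lascar inequalities and using $\omega+\omega\cdot(d-1)=\omega\cdot d$ yields $\Mr(g/wA)=\omega\cdot d$. For $w$, Corollary~\ref{subsets_of_k} says the structure induced on $\bk$ is a pure algebraically closed field, in which (as $\Mr(\bk)=1$) Morley rank is transcendence degree; thus $\Mr(w/A)$ is the transcendence degree of the $\bk$-part of $a$ over $\bk\cap\acl(A)$, the finite second summand in the statement.

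To finish I would combine the two blocks. The subadditive Lascar inequality gives
\[
\Mr(a/A)=\Mr(gw/A)\ \geq\ \Mr(g/wA)+\Mr(w/A)=\omega\cdot d+\Mr(w/A),
\]
while the superadditive one gives $\Mr(gw/A)\leq\Mr(g/wA)\oplus\Mr(w/A)=\omega\cdot d\oplus\Mr(w/A)$. As $\Mr(w/A)$ is finite, the natural sum $\omega\cdot d\oplus\Mr(w/A)$ equals the ordinal sum $\omega\cdot d+\Mr(w/A)$, so the two bounds pinch and give the asserted value.

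The hard part is precisely this coincidence of bounds. In general Morley rank is only sub- and super-additive, the upper estimate using the natural sum $\oplus$, so exact additivity is not automatic; what rescues it is the arithmetic shape of the summands—one a multiple of $\omega$, the other finite—which forces $\oplus$ and ordinary addition to agree. It is to guarantee this shape that the separation of $a$ into the transcendental block $g$ and the genuinely $\bk$-internal block $w$ (realized inside $\bk$ via stable embeddedness) must be carried out carefully: a careless split would place the finite contribution on the left of $\omega\cdot d$, where it would be absorbed, since $k+\omega\cdot d=\omega\cdot d$ for every finite $k$.
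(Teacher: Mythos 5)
The paper does not prove this statement at all: it is imported as a ``Fact'' from an unnamed reference (the source reads ``We need the following fact from ???''), so there is no proof of record to compare against and your proposal has to stand on its own. Your overall architecture --- replace $a$ by an interalgebraic pair $(g,w)$ with $g$ a transcendence basis of $a$ over $\bk(A)$ and $w$ a finite tuple from $\bk$, then read off $\omega\cdot d$ from the $g$-block and the finite summand from the $w$-block via stable embeddedness of $\bk$ --- is the standard and correct route, and your reading of the second summand as the transcendence degree of the trace of $\acl(Aa)$ on $\bk$ over that of $\acl(A)$ is the one that makes the formula true.

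The genuine gap is the appeal to ``the Lascar (in)equalities for Morley rank.'' Lascar's inequalities
\[
U(g/wA)+U(w/A)\ \le\ U(gw/A)\ \le\ U(g/wA)\oplus U(w/A)
\]
are a theorem about $U$-rank in superstable theories; for Morley rank in a general $\omega$-stable theory \emph{neither} inequality holds, and there are standard counterexamples to additivity of $\Mr$. The lower bound in your pinching argument can be rescued, since $U\le\Mr$ always, so $\Mr(gw/A)\ge U(gw/A)\ge \omega\cdot d+\Mr(w/A)$. But the upper bound $\Mr(gw/A)\le\Mr(g/wA)\oplus\Mr(w/A)$ has no general justification, and it is precisely the hard half of the Fact: one must either first prove that $\Mr=U$ in the theory of pairs (which is essentially equivalent to the Fact itself, so circular as a shortcut) or establish the upper bound directly, e.g.\ by using the constructibility/quantifier-elimination analysis to exhibit an $A$-definable set containing $a$ of Morley rank at most $\omega\cdot d+e$. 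You identify ``the hard part'' as the coincidence of $\oplus$ with ordinal addition, but that is the easy ordinal arithmetic; the real issue is that the two inequalities you are pinching between are not known for $\Mr$. A secondary, fixable point: interalgebraicity of $a$ with $(g,w)$ over $A$ requires $w\in\acl(Aa)$, which does not follow from merely ``gathering the data'' witnessing algebraicity of the remaining coordinates over $\bk(Ag)$ --- the witnesses must be chosen canonically (say, a transcendence basis of $\bk\cap\acl(Aa)$ over $\bk\cap\acl(A)$, together with an argument that $a\in\acl(Agw)$ for that choice).
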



\begin{proposition}
Let $X\subseteq\Omega^n$ be definable. Then $\Mr(X)=\Mr(\overline{X})$.
\end{proposition}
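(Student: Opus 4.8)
The plan is to prove the two inequalities separately. The inequality $\Mr(X)\le\Mr(\overline X)$ is immediate from $X\subseteq\overline X$ and monotonicity of Morley rank, so all the work lies in $\Mr(\overline X)\le\Mr(X)$. Throughout I fix a set $A$ over which $X$, and hence $\overline X$ by Corollary \ref{closure_parameters}, is defined, and I read the Fact as splitting $\Mr(a/A)$ into a ``large'' part $\omega\cdot\rk(a/A)$ and a finite ``small'' part $\trdeg(\bk(a,A)/\bk(A))$. Since the small part is bounded by $n$ while the large part moves in multiples of $\omega$, the maximum of $\Mr(a/A)$ over $a$ in any definable set is attained at points of maximal small rank; combined with the already-proved small-dimension analogue $\sdim(X)=\sdim(\overline X)=:d$, this gives $\Mr(\overline X)=\omega d+\tau^\ast(\overline X)$ and $\Mr(X)=\omega d+\tau^\ast(X)$, where $\tau^\ast(\cdot)$ is the maximum of $\trdeg(\bk(a,A)/\bk(A))$ over points $a$ of the set with $\rk(a/A)=d$. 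Thus it suffices to realize the value $\tau^\ast(\overline X)$ inside $X$.

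First I reduce to the irreducible case. Decompose $\overline X$ into its finitely many pair-irreducible components $C_1,\dots,C_r$ and pick $C:=C_{i_0}$ with $\Mr(C)=\Mr(\overline X)$. Because $X$ is dense in $\overline X$, a short argument with the components shows $\overline{X\cap C_i}=C_i$ for every $i$; in particular $X\cap C$ is dense in the irreducible $C$. As $X\cap C$ is constructible by Theorem \ref{MainThv2} and dense in the irreducible $C$, one of its locally closed pieces is dense, so $X\cap C$ contains a nonempty pair-open subset $V=C\cap U$ with $V\subseteq X$. Hence it is enough to prove $\Mr(V)=\Mr(C)$, i.e.\ that a dense pair-open subset of a pair-irreducible closed set carries the full Morley rank.

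To do this I produce the generic type of $C$. Using Corollary \ref{closed_sets_determine_types}, consider the partial type over $A$ asserting $x\in F$ for every $A$-definable closed $F\supseteq C$ and $x\notin D$ for every $A$-definable closed $D\subsetneq C$; it is finitely satisfiable because finitely many proper closed subsets cannot cover the irreducible $C$, and by Corollary \ref{closed_sets_determine_types} it completes to a single type $p_C$ whose closed sets are exactly the $F\supseteq C$. Any $a\models p_C$ then lies in $C$ and avoids every proper closed subset of $C$; in particular $a\in V$, since $C\setminus V$ is proper closed, so $\Mr(V)\ge\Mr(p_C)$. It remains to compute $\Mr(p_C)=\Mr(C)$. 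As every Zariski-closed set is pair-closed, $a$ avoids all proper Zariski-closed conditions over $\bk(A)$ on $C$, i.e.\ $a$ is Zariski-generic over $\bk(A)$, so $\trdeg(\bk(a,A)/\bk(A))$ equals the Zariski dimension $z$ of $C$ over $\bk(A)$, which is the unconstrained maximum of this transcendence degree on $C$. Fixing a projection $\pi\colon\Omega^n\to\Omega^d$ with $\pi(C)$ of nonempty interior (this exists because $\sdim(C)=d$), the image $\pi(C)$ has full small dimension $d$, hence full Zariski dimension $d$ and is Zariski-dense (small rank never exceeds transcendence degree over $\bk$); Zariski-genericity of $a$ then forces the $d$ projected coordinates to be algebraically independent over $\bk(A)$, whence $\rk(a/A)=d$. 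By the Fact, $\Mr(p_C)=\omega d+z$, and since $\tau^\ast(C)\le z$ while $a$ itself witnesses $\tau^\ast(C)\ge z$, we conclude $\Mr(p_C)=\omega d+\tau^\ast(C)=\Mr(C)$.

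The main obstacle is precisely this last computation, namely that the pair-generic type $p_C$ attains the full Morley rank of $C$. The Zariski (small-part) half is clean, since Zariski-closed sets are pair-closed; the delicate half is the large/small split --- that $a\models p_C$ has maximal small rank $d$ --- which I handle by transporting Zariski-genericity through a distinguished projection and using that small dimension is bounded by transcendence degree. This is the step where pair-irreducibility must be made to interact with field-theoretic genericity, and it is morally adjacent to the irreducibility issue flagged in the paper's open Question, so it is where the argument needs the most care.
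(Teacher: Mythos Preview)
Your approach differs substantially from the paper's. The paper runs a short induction on $\Mr(X)$: assuming Morley degree $1$, pick an $\Mr$-generic $a\in X$ over $A$; since types are determined by their closed sets (Corollary~\ref{closed_sets_determine_types}), some finite intersection of closed sets in $\tp(a/A)$ already gives a closed $A$-definable $Y$ with $\Mr(Y)=\Mr(a/A)=\Mr(X)$; then $\Mr(X\setminus Y)<\Mr(X)$, so by induction $\Mr(\overline{X\setminus Y})<\Mr(X)$, while $\overline{X\cap Y}\subseteq Y$ gives $\Mr(\overline{X\cap Y})\le\Mr(X)$, and $\overline X=\overline{X\cap Y}\cup\overline{X\setminus Y}$ finishes.

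Your argument has a real gap. You rely throughout on the Fact $\Mr(a/A)=\omega\cdot\rk(a/A)+\trdeg(\bk(a,A)/\bk(A))$, but as written this is false: for any $a\in\bk^n$ both summands vanish, yet $\Mr(\bk^n)=n$. (The missing citation in the paper already signals trouble; the correct second term measures the transcendence of auxiliary \emph{constants} from $\bk$ needed to witness the $\scl$-dependences, not the transcendence of $a$ over $\bk(A)$.) Your ``$\omega d+\tau^\ast$'' decomposition and the identification of $\tau^\ast(C)$ with a Zariski dimension therefore break down---test them on $C=\bk$. Even setting this aside, the step ``$a\models p_C$ is Zariski-generic over $\bk(A)$'' is not justified: $p_C$ only excludes $A$-definable proper closed subsets of $C$, whereas a Zariski hypersurface with coefficients in $\bk(A)$ generally needs parameters from $\bk\setminus A$ and is not $A$-definable in the pair, and you cannot absorb $\bk$ into $A$ without destroying the saturation hypothesis of Corollary~\ref{closed_sets_determine_types}. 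In effect you are assuming that the pair-generic type of an irreducible closed $C$ has Morley rank $\Mr(C)$, which is essentially the proposition itself. The paper sidesteps this circularity by extracting the closed set $Y$ \emph{from} a generic of $X$, rather than attempting to compute the rank of the generic type of a prescribed closed set.
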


\begin{proof}
Without loss of generality, we may assume that $X$ is of Morley degree $1$.
Let $X$ be definable over $A$, then by Corollary \ref{closure_parameters}, $\overline X$ is also definable over $A$.  We proceed by induction on $\Mr(X)$. 

\medskip\noindent 
Take a $\Mr$-generic $a\in X$ (over $A$). 
By Corollary \ref{closed_sets_determine_types}, there is a closed $Y$ defined over $A$ that contains $a$ and $\Mr(Y)=\Mr(a/A)=\Mr(X)$. Then we also have that $\Mr(X\cap Y)\geq\Mr(a/A)=\Mr(X)$. So indeed we have $\Mr(X\cap Y)=\Mr(X)$.

\medskip\noindent
As we assume that $X$ is of Morley degree $1$, we have $\Mr(X\setminus Y)<\Mr(X)$. Hence $\Mr(X\setminus Y)=\Mr(\overline{X\setminus Y})$ by the induction hypothesis.  Then 
$$
\overline X=\overline{X\cap Y}\cup\overline{X\setminus Y},
$$
and
\[
\Mr(\overline X)=\max\{\Mr(\overline{X\cap Y}),\Mr(\overline{X\setminus Y})\}=\Mr(\overline{X\cap Y}).
\]
However, $\overline{X\cap Y}\subseteq Y$ and $\Mr(\overline{X\cap Y})\leq\Mr(Y)$, which proves the proposition. 

\end{proof}

\medskip\noindent
Once again, the following is a consequence of Lemma \ref{intersect_open}.

\begin{corollary}
Let $X$ be a definable set such that $\overline X$ is irreducible and let $U$ be an open set that intersects $X$. Then $\Mr(X\cap U)=\Mr(X)$.
\end{corollary}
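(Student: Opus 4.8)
The plan is to deduce the corollary directly from Lemma \ref{intersect_open} together with the proposition asserting that $\Mr(Y)=\Mr(\overline Y)$ for every definable $Y$. The first thing I would note is that $X\cap U$ is itself definable: $X$ is definable by hypothesis, and $U$, being open, is the complement of a closed---hence definable---set, so the intersection is definable. This is the observation that makes the rank-closure proposition applicable to $X\cap U$ and not merely to $X$.

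The core of the argument is then a short chain of equalities. Since $U$ is open, meets $X$, and $\overline X$ is irreducible, the hypotheses of Lemma \ref{intersect_open} are satisfied verbatim, and it yields
\[
\overline{X\cap U}=\overline X.
\]
Applying the proposition to the definable sets $X\cap U$ and $X$ in turn, and substituting the displayed identity in the middle, I would conclude
\[
\Mr(X\cap U)=\Mr(\overline{X\cap U})=\Mr(\overline X)=\Mr(X),
\]
which is exactly the assertion.

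Since both ingredients are already established, I do not anticipate any genuine obstacle; the corollary is a formal consequence once the two preceding results are in hand. All of the substantive work has in fact been pushed into Lemma \ref{intersect_open}, where the irreducibility of $\overline X$ is precisely what prevents the open set $U$ from slicing away a top-rank component of $\overline X$. The only point deserving a moment's care is confirming that the hypotheses of Lemma \ref{intersect_open} match those of the corollary exactly---they do---so that no supplementary case analysis is required.
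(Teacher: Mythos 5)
Your proof is correct and follows exactly the route the paper intends: the paper states this corollary as an immediate consequence of Lemma \ref{intersect_open} combined with the proposition $\Mr(Y)=\Mr(\overline Y)$, which is precisely your chain $\Mr(X\cap U)=\Mr(\overline{X\cap U})=\Mr(\overline X)=\Mr(X)$. Your added remark that $X\cap U$ is definable (so the rank-closure proposition applies to it) is a worthwhile point of care that the paper leaves implicit.
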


\end{document}